\documentclass[11pt]{article}
\usepackage[utf8]{inputenc}
\usepackage{amsmath}
\usepackage{amsfonts}
\usepackage{amssymb}
\usepackage{geometry}
\usepackage{titlesec}
\usepackage{lipsum}
\usepackage{tikz-cd}
\usepackage{amsmath}
\usepackage{parskip}
\usepackage{tikz}
\usepackage[all]{xy}
\usepackage{graphics}
\usepackage{enumitem}
\usepackage{pgfplots}
\pgfplotsset{compat=1.15}
\usepackage{mathrsfs}
\usetikzlibrary{arrows}
\usepackage{amsthm}
\usepackage{subcaption}
\usepackage{algorithm2e}
\usepackage{amssymb}
\usepackage{hyperref}
\usepackage{float}
\usepackage{amsthm}
\usepackage{subcaption}
\usepackage{comment}
\usepackage{empheq}
\usepackage{hyperref}
\usetikzlibrary{decorations.pathreplacing}
\titleformat{\section}{\large\bfseries}{\thesection}{1em}{}
\titleformat{\subsection}{\normalsize\bfseries}{\thesubsection}{1em}{}

\usepackage{amsthm}

\newtheoremstyle{runin} 
  {6pt}                  
  {0pt}                  
  {\normalfont}          
  {}                     
  {\bfseries}            
  {.}                    
  { }                    
  {}                     

\theoremstyle{runin}
\usepackage{amsthm}

\theoremstyle{plain} 
\newtheorem{theorem}{Theorem}
\newtheorem{proposition}[theorem]{Proposition}
\newtheorem{lemma}[theorem]{Lemma}

\newtheorem{remark}[theorem]{Remark}

\theoremstyle{definition}
\newtheorem{definition}{Definition}[section]
\usepackage{thmtools}

\makeatletter
\renewenvironment{proof}[1][Proof]{%
  \par\pushQED{\qed}%
  \normalfont
  \trivlist
  \item[\hskip\labelsep\bfseries #1\@addpunct{:}]%
  \mbox{}\par\nobreak
}{%
  \popQED\endtrivlist\@endpefalse
}
\makeatother
\usepackage{hyperref}
\usepackage{url}
\begin{document}
\begin{center}
{\Large \textbf{Möbius functions for pseudo-Levi subgroups in finite general linear and symplectic groups}}
\vspace{0.5cm}

\textbf{Aritra Kundu}\\
aritra@cmi.ac.in\\
RPTU, Kaiserslautern\\
\end{center}

\vspace{0.5cm}

\begin{abstract}
 In this paper, we compute the Möbius function on the set of pseudo-Levi subgroups containing a fixed maximal torus of two families of finite groups, $GL_n(q)$ and $Sp_{2n}(q)$, for some natural number $n$ and a prime power $q$. The Möbius function for the set of pseudo-Levi subgroups of a finite group $G$ is important for an explicit evaluation of the formula expressing the decomposition of tensor products of characters proved in Theorem 2 of \cite{nam2025multiplicity}.   
\end{abstract}

\section{Introduction}
Let $\mathbf{G}$ be a connected, reductive algebraic group that is isomorphic to a subgroup of $GL_n(\overline{\mathbb{F}_q})$, where $n$ is a positive integer and $q$ is a prime power. Let $F :GL_n(\overline{\mathbb{F}_q})\to GL_n(\overline{\mathbb{F}_q})$ be the Frobenious map, which maps each entries $a_{ij}$ of a matrix in $GL_n(\overline{\mathbb{F}_q})$ to $a_{ij}^q$. Let $\mathbf{G}^F=\{g\in G\mid F(g)=g\}$. Clearly, $\mathbf{G}^F$ is a finite subgroup of $GL_n(\mathbb{F}_q)$. From now on, we denote $\mathbf{G}^F$ by $G$.\\
Let $\mathbf{T}$ be a maximal torus of $\mathbf{G}$. If $\mathbf{T}$ is $F$-stable, i.e., $F(\mathbf{T})=\mathbf{T}$ then the normaliser of $\mathbf{T}$, $N_G(\mathbf{T})$ is also $F$-stable. Therefore, the Weyl group of $\mathbf{G}$ is also $F$-stable. Let $V_1, V_2$ be two subgroups of $\mathbf{G}$. We say $V_1, V_2$ are $F$-conjugate if $V_1=gV_2F(g)^{-1}$ for some $g$ in $\mathbf{G}$. According to Proposition 25.1 of \cite{malletesterman}, $G$-conjugacy classes of $F$-stable maximal tori of $\mathbf{G}$ are in one-to-one correspondence with $F$-conjugacy classes of the Weyl group (unique up to group isomorphism) $W$ of $\mathbf{G}$. In this paper, we assume $F$ acts trivially on $W$. Therefore, under the previous assumption, $G$-conjugacy classes of maximal tori of $\mathbf{G}$ are in one-to-one correspondence with conjugacy classes of the Weyl group of $\mathbf{G}$. A pseudo-Levi subgroup of $G$ is the subgroup $G_g$, where $g$ is a semisimple element of $G$ and $G_g$ is the centraliser of $g$ in $G$. From Corollary 4.5 of \cite{malletesterman}, we know that any semisimple element of $\mathbf{G}$ lies in a maximal torus. As tori are abelian groups (the centraliser of an element of $T$ will contain $T$), each pseudo-Levi subgroup of $G$ will contain a maximal torus. Let $[w]$ be a conjugacy class of $W$ and $T_w$ be a representative of the corresponding $G$-conjugacy class of maximal tori of $G$. Assume that $\mathcal{T}$ is the set of all pseudo-Levi subgroups of $G$ containing $T_w$. We consider $\mathcal{T}$ as a partially ordered set, where the partial ordering is the set inclusion. With respect to this partial ordering, we compute the Möbius function $\mu_w(\mathfrak{T},\mathfrak{T^{\prime}})$, where $\mathfrak{T}, \mathfrak{T}^{\prime}$ are in $\mathcal{T}$. In this paper, we drop $w$ from $\mu_w$ whenever the underlying torus or the corresponding conjugacy class of the Weyl group is clear from the context.\\
I am grateful to \textbf{Prof. Gunter Malle} for suggesting the paper \cite{nam2025multiplicity} and for supporting me at every stage of formulating my intuition for the solution to this problem.
\section{The case of $GL_n(\mathbb{F}_q)$}
In this section, we assume $G$ is $GL_n(\mathbb{F}_q)$ for some natural number $n$ and prime power $q$. A partition of $n$ is a tuple of positive integers $(\lambda_1,\dots,\lambda_r)$ for some positive integer $r$ such that $\lambda_1\geq \lambda_2\geq\dots\geq \lambda_r> 0$ and $\sum_{i=1}^r\lambda_i=n$. The trivial partition of $n$ is the partition of $n$ where $r=n$ and all $\lambda_i=1$.
The Weyl group of $G$ is $S_n$, the permutation group of order $n!$. The conjugacy classes of $S_n$ are in one-to-one correspondence with the partitions of $n$ (Proposition 1.9 of \cite{choi2010symmetric}). Therefore, the set of conjugacy classes of maximal tori of $GL_n(\mathbb{F}_q)$ corresponds one-to-one with the set of partitions of $n$.\\
Let $s$ be a semisimple element of $GL_n(q)$ such that the characteristic polynomial of $s$, $f_s=\Pi_{i=1}^rf_i^{a_i}$ where $f_i$ are prime in $\mathbb{F}_q[X]$. Then, the centraliser of $s$ will be $\Pi_{i=1}^r GL_{a_i}(q^{d_i}),$ where $\sum_{i=1}^ra_id_i=n$, where $d_i$ is the degree of $f_i$. The set of centralisers of semisimple elements of $GL_n(q)$ up to conjugacy is in one-to-one correspondence with decompositions of $n$ of the form $\sum_{i=1}^ra_id_i$, where $a_i,d_i\geq 1$.
In subsection 2.1, we compute the Möbius function on the set of all pseudo-Levi subgroups containing the torus corresponding to the trivial partition, and in subsection 2.2, we do the same for a general torus.
\subsection{The set of pseudo-Levi subgroups containing a fixed split torus}
Let $T_1$ be a representative of the conjugacy class of the maximal tori corresponding to the trivial partition of $n$. We may choose $T_1$ to be the subgroup of diagonal matrices in $GL_n(q)$. We call this torus a split torus of $GL_n(q)$.
Consider a semisimple element $s$ of $T_1$. The characteristic polynomial of $s$ will be of the form $\Pi_{i=1}^r(x-\alpha_i)^{a_i}$, where $\alpha_i\in \mathbb{F}_q^{\times}$ are pairwise distinct, and $a_i$ are positive integers such that $\Sigma_{i=1}^ra_i=n$. Therefore, the centraliser of $s$, $G_s$ is $\Pi_{i=1}^rGL_{a_i}(q)$.\\
Let us consider the set of partitions of $\{1,2,\dots,n\}$. We denote a partition $S_{i_1,i_2,\dots,i_r}$ of $[n]$ such that $S_{i_1,i_2,\dots,i_r}=123\dots i_1|i_1+1\dots i_2|i_2+1\dots i_3|\dots |i_r+1\dots n$, where $1<i_1<i_2<\dots<i_r<n$.
Let $\Pi_n$ be equal to the set $\{S_{i_1,i_2,\dots,i_r}|\forall (i_1,i_2,\dots,i_r) $ satisfying $ 1<i_1<i_2<\dots <i_r<n\}$. For an element $x$ in $\Pi_n$, a block of $x$ is the set of the elements present in one partition. For example, blocks of $S_{i_1\dots i_r}$ are $\{i_k+1, \dots,i_{k+1}\}$ for some $0\leq k\leq r$ , where $i_0=0$. Sub-blocks are defined as subsets of a block.
Now, let us define a partial order on $\Pi_n$. Let $x,y$ be two distinct elements of $\Pi_n$. We define $x\leq y$ if blocks of $x$ are sub-blocks of $y$.
\begin{lemma} \label{poset bijection}
  The poset of pseudo-Levi subgroups of $GL_n(q)$ containing $T_1$ (w.r.t. inclusion) is isomorphic to $\Pi_n$.  
\end{lemma}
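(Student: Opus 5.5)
The plan is to attach to each semisimple $s\in T_1$ the set partition of $\{1,\dots,n\}$ determined by its eigenvalues, and to show that $G_s$ depends only on this partition. Write $s=\mathrm{diag}(t_1,\dots,t_n)$ and put $i\sim j$ iff $t_i=t_j$; let $\pi(s)$ be the resulting partition of $[n]$. A direct matrix computation shows that $g=(g_{ij})$ commutes with $s$ iff $g_{ij}(t_i-t_j)=0$ for all $i,j$, that is, iff $g_{ij}=0$ whenever $i\not\sim j$. Hence
\[
G_s=\prod_{B\in\pi(s)} GL(V_B),\qquad V_B=\langle e_i : i\in B\rangle ,
\]
which is the description $\prod GL_{a_i}(q)$ given just before the lemma. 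So $G_s$ is determined by $\pi(s)$. Conversely, $\pi(s)$ can be read off from $G_s$: it is the partition of $[n]$ into the minimal nonzero coordinate subspaces stable under $G_s$, equivalently the connected components of the graph with edges $\{i,j\}$ such that $G_s$ has a matrix with $g_{ij}\neq 0$. This gives an injective map $\Phi$ from $\mathcal T$ into set partitions of $[n]$.

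Next I will show that $\Phi$ and its inverse preserve order. The inclusion $\prod_{B\in\pi}GL(V_B)\subseteq\prod_{C\in\pi'}GL(V_C)$ holds iff every $GL(V_B)$ preserves the decomposition $\bigoplus_C V_C$. Since $GL(V_B)$ acts irreducibly on $V_B$, this happens iff each $B$ lies inside some $C$, i.e. the blocks of $\pi$ are sub-blocks of $\pi'$. This is exactly the order used on $\Pi_n$.

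The main obstacle is identifying the image of $\Phi$ with $\Pi_n$ as the paper defines it, with blocks that are consecutive intervals. There are two issues. First, $\pi(s)$ has at most $q-1$ blocks, because the eigenvalues lie in $\mathbb F_q^\times$. So for surjectivity I will assume $q-1\ge n$, or else restrict to partitions with at most $q-1$ blocks; given such a partition, I assign distinct scalars to distinct blocks to realise it. Second, an arbitrary set partition of $[n]$ arises, not only an interval one. To land in $\Pi_n$ I will use the Weyl group $N_G(T_1)/T_1\cong S_n$: permutation matrices normalise $T_1$ and permute coordinates. Every set partition can therefore be conjugated to an interval one, namely $S_{i_1,\dots,i_r}$ with block sizes $a_1,\dots,a_r$.

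At this step I will first try to make precise the identification the statement intends. Either the poset is taken up to this $N_G(T_1)$-action, with the interval labelling serving as a normal form, or $\Pi_n$ is read as the full lattice of set partitions of $[n]$ with the refinement order. In the latter reading the argument above is already a complete isomorphism, given $q\ge n+1$. If the literal interval version is intended, I will state the lemma for pseudo-Levi subgroups that are block diagonal in the standard coordinate order, i.e. contain $T_1$ and are normalised compatibly with the fixed flag. For these the block-diagonal shape forces the blocks to be intervals, and the same order argument applies verbatim.
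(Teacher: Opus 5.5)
Your centraliser computation is correct, and it is more careful than the paper's own argument. The paper writes a pseudo-Levi subgroup containing $T_1$ abstractly as $\prod_i GL_{a_i}(q)$ and sends it to the interval partition $S_{a_1,a_1+a_2,\dots}$. It then asserts that an inclusion $G_1\subseteq G_2$ forces each $c_i$ to be a sum of \emph{consecutive} $b_j$. Your description $G_s=\prod_{B\in\pi(s)}GL(V_B)$ shows that this is exactly where the argument breaks, because the blocks $B$ are arbitrary subsets of $[n]$, not intervals.

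For $n=3$ and $q\ge 4$ the poset consists of five subgroups:
\begin{itemize}
\item $T_1$;
\item the three subgroups $GL(V_{\{1,2\}})\times GL(V_{\{3\}})$, $GL(V_{\{1,3\}})\times GL(V_{\{2\}})$ and $GL(V_{\{2,3\}})\times GL(V_{\{1\}})$;
\item $GL_3(q)$.
\end{itemize}
The paper's $\Pi_3$ has only the four elements $1|2|3$, $12|3$, $1|23$, $123$. The paper's map $f$ records only the factor sizes, so it cannot separate the three middle subgroups. Hence, with $\Pi_n$ read literally as the paper defines it, the lemma is false for $n\ge 3$. As you also observed, for $q-1<n$ not even $T_1$ is a pseudo-Levi subgroup.

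The problem with your proposal is that it does not commit to this conclusion. It offers rescues instead, and one of them fails.
\begin{itemize}
\item Passing to $N_G(T_1)$-orbits with interval labellings as a normal form does not work. Interval labellings are not a normal form: $12|3$ and $1|23$ are exchanged by the permutation $(1\,3)$. The orbit poset for $n=3$ is a three-element chain, not $\Pi_3$. Conjugating elements one at a time into interval shape gives no injective, let alone order-preserving, map.
\item Restricting to subgroups that are block diagonal in the standard order yields a true statement, but about a proper subposet. Its M\"obius function is not the one needed: on $[T_1,GL_3(q)]$ it gives $1$, whereas the actual poset gives $2$.
\end{itemize}

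What your argument genuinely proves is your second reading. The poset is isomorphic to the set partitions of $[n]$ with at most $q-1$ blocks, ordered by refinement; this is the full partition lattice when $q\ge n+1$. Every $z$ with $x\le z\le y$ has at most as many blocks as $x$, so each interval $[x,y]$ is a full interval of the partition lattice. Therefore $\mu(x,y)=\prod_i(-1)^{p_i-1}(p_i-1)!$, where the $i$th block of $y$ is a union of $p_i$ blocks of $x$. This differs from the value $\prod_i(-1)^{p_i-1}$ that the paper derives afterwards.

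Finally, state the step you left implicit. If $T_1\subseteq C_G(g)$, then $g\in C_G(T_1)=T_1$, which is valid for $q\ge 3$. So every pseudo-Levi subgroup containing $T_1$ is indeed $G_s$ for some $s\in T_1$.
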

\begin{proof}
    Let $G$ be a pseudo-Levi subgroup containing $T_1$. So, $G$ will be of the form $\Pi_{i=1}^rGL_{a_i}(q)$ where $\Sigma_{i=1}^ra_i=n$. Let $f$ be the map from the poset of pseudo-Levi subgroups containing $T_1$ to $\Pi_n$ such that $f(G)=S_{a_1,a_1+a_2,\dots,a_1+a_2+\dots+a_{r-1}}$. Let $G_1\subseteq G_2$ be two elements of $H$ (set of all pseudo-Levi subgroups containing $T_1$) such that $G_1=\Pi_{i=1}^{r_1}GL_{b_i}(q)$ and $G_2=\Pi_{i=1}^{r_2}GL_{c_i}(q)$. As $G_1\subseteq G_2$, there exists $k_i$ such that each $c_i$ is the sum of consecutive $k_i$ elements of the $b_j$. So, $f(G_1)\leq f(G_2)$ in $\Pi_n$. Also for each partition $S_{i_1,i_2\dots,i_r}$ we have a pseudo-Levi subgroup $G=\Pi_{k=1}^rGL_{i_k-i_{k-1}}(q)$, where $i_0=0$ such that $f(G)=S_{i_1,\dots,i_r}$. So, $f$ is an order-preserving bijective map. Therefore, the poset of pseudo-Levi subgroups containing $T_1$ and $\Pi_n$ are isomorphic as posets.
    \end{proof}

Now, as a corollary to Lemma \ref{poset bijection}, we will determine the Möbius function for the set $\Pi_n$ instead of the poset of pseudo-Levi subgroups containing $T_1$. Let $x,y$ be two elements in $\Pi_n$ such that $x\leq y$, and we want to determine $\mu_n(x,y)$, the value of the Möbius function between $x$ and $y$. From the definition of the Möbius function, we know that the value of the Möbius function only depends on the elements that lie inside the interval $[x,y]$. As $x\leq y$, blocks of $x$ are sub-blocks of $y$. Therefore, $[x,y]=\Pi_{p_1}\times \Pi_{p_2}\times\dots \times \Pi_{p_r}$, where $r$ is the number of blocks of $y$ and the $i$th block of $y$ is sub-divided into $p_i$ sub-blocks in $x$.\\
So, $\mu_n(x,y)=\mu_{p_1}(0,1)\times \mu_{p_2}(0,1)\times\dots\times\mu_{p_r}(0,1)$, where $\mu_j(0,1)$ is the Möbius function of the least element and highest element of $\Pi_j$. Hence, it is worth determining $\mu_n(0,1)$ for an arbitrary positive integer $n$.
\begin{theorem}\label{thm 2(a)}
    Let $0$ and $1$ be the lowest and highest elements of $\Pi_n$, respectively. Then $\mu_n(0,1)=(-1)^{n-1}$.
\end{theorem}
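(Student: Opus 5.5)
The plan is to identify $\Pi_n$ with a Boolean lattice and then apply the Möbius function of a product of chains. An element $S_{i_1,\dots,i_r}$ of $\Pi_n$ is a decomposition of $\{1,\dots,n\}$ into consecutive blocks. It is determined by its set of ``cut points'' $C=\{i_1,\dots,i_r\}$, where a cut after position $i$ means that $i$ and $i+1$ lie in different blocks. The admissible cut points are the $n-1$ gaps $\{1,\dots,n-1\}$, and every subset of gaps occurs. (Here I read the indexing condition $1<i_1$ loosely as $1\le i_1$, as Lemma \ref{poset bijection} requires in order to realise $GL_1(q)\times\cdots$.) First I will check that $x\le y$, meaning every block of $x$ is a sub-block of a block of $y$, holds exactly when $C(y)\subseteq C(x)$. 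Indeed, a block of $x$ lies in a single block of $y$ iff no cut of $y$ falls strictly inside it, which is the same as saying every cut of $y$ is also a cut of $x$. Hence $x\mapsto \{1,\dots,n-1\}\setminus C(x)$ is an order isomorphism from $\Pi_n$ onto the Boolean lattice $B_{n-1}$ of subsets of an $(n-1)$-element set, ordered by inclusion. Under this map the bottom $0$ (all singletons, all cuts) goes to $\emptyset$, and the top $1$ (one block, no cuts) goes to the full set.

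Next I will use the standard fact that $B_{n-1}\cong C_2^{\,n-1}$, a product of $n-1$ two-element chains. The Möbius function is multiplicative on products of posets, which is the same principle the paper already used to write $\mu_n(x,y)=\prod\mu_{p_i}(0,1)$. On a two-element chain, $\mu(0,1)=-1$, so $\mu_n(0,1)=(-1)^{n-1}$.

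To keep the argument self-contained without quoting multiplicativity, I can instead prove by induction on $k=|A|$ that $\mu(\emptyset,A)=(-1)^{|A|}$ in $B_{n-1}$. The defining recursion gives $\mu(\emptyset,A)=-\sum_{B\subsetneq A}(-1)^{|B|}$, and since $\sum_{B\subseteq A}(-1)^{|B|}=(1-1)^{k}=0$ for $k\ge1$, this equals $(-1)^k$. Taking $k=n-1$ finishes the proof, and the case $n=1$ is trivially $\mu=1$.

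The only step that needs care is the order isomorphism, specifically verifying that the sub-block relation matches reverse inclusion of cut sets and that every subset of gaps is realised. The rest is the routine binomial identity.
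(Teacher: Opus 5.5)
Your proof is correct, but it takes a different route from the paper. The paper never identifies the structure of $\Pi_n$. Instead it counts colourings of the blocks of $x$ with $k$ colours in two ways, obtaining $k^{|x|}=\sum_{y\geq x}k(k-1)^{|y|-1}$. It Möbius-inverts this to $k(k-1)^{|x|-1}=\sum_{y\geq x}\mu_n(x,y)k^{|y|}$ and compares coefficients of $k$ (only $y=1$ has a single block), which gives $\mu_n(x,1)=(-1)^{|x|-1}$ for every $x$; it then specialises to $x=0$.

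Your cut-set bijection $\Pi_n\cong B_{n-1}$ is more elementary and actually gives more. Every interval of a Boolean lattice is again Boolean, so you get $\mu_n(x,y)=(-1)^{|x|-|y|}$ for all $x\leq y$ at once. This makes the paper's product decomposition $[x,y]\cong\Pi_{p_1}\times\cdots\times\Pi_{p_r}$ unnecessary and yields its split-torus formula $\mu(G_1,G_2)=(-1)^{k-p}$ directly. It also avoids having to verify the colouring bijection and to pass from an identity valid for all large $k$ to a polynomial identity.

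What the paper's method buys is portability. The same double-counting scheme, with one extra ``underline'' choice per block, is reused for the underlined partitions $\underline{\Pi_k}$ in the symplectic section. There $\mu(0,1)=(-1)^p2^{p-1}$ shows the poset is not Boolean, so no such structural shortcut exists.

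Your reading of $1<i_1$ as $1\leq i_1$ is the right one. It is forced by the paper's own use of a bottom element with $n$ blocks.
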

\begin{proof}
    Let $x$ be an element of $\Pi_n$. A colouring of $x$ is an assignment of colours to each block of $x$ by $k$ many colours, where $k$ is an integer larger than $n$. A colouring is called a distinct colouring if any two adjacent blocks have distinct colours.
    So, the number of colourings of $x$ is $k^{|x|}$, where $|x|$ is the number of blocks of $x$. Now we re-calculate this number by considering all possible elements in $[x,1]$.\\
     In a colouring of $x$, if two adjacent blocks have the same colour, we merge the two blocks and consider it as a distinct colouring of the element $y$ in $[x,1]$ that is obtained after joining the adjacent blocks of the same colour. Let $y$ be an element in $[x,1]$ and $|y|$ be the number of blocks of $y$. To get a distinct colouring of $y$, we can colour the first block of $y$ by $k$ many colours, the second block by $k-1$ many colours (can't choose the colour assigned in the first block), the third block by $k-1$ colours (can't choose the colour assigned in the second block) and so on. So, the number of distinct colorings of $y$ is $k(k-1)^{|y|-1}$.\\
Therefore, 
\begin{align*}
   & k^{|x|}=\sum_{y\geq x}k(k-1)^{|y|-1}\\
   \implies & k(k-1)^{|x|-1}=\sum_{y\geq x}\mu_n(x,y) k^{|y|} \text{\quad[Definition of the Möbius function]}\\ 
\end{align*}
  Now $|y|=1$ if and only if $y=1$. So, by equating the coefficient of $k$ from both sides we get, $(-1)^{|x|-1}=\mu_n(x,1)$. Hence, by choosing $x=0$ we get $\mu_n(0,1)=(-1)^{n-1}$.       
\end{proof}
Therefore, for $x,y$ in $\Pi_n$, if $x\leq y$ then $\mu_n(x,y)=(-1)^{p_1-1}\times (-1)^{p_2-1}\times\dots \times (-1)^{p_r-1}$, where $y$ has $r$ many blocks and the $i$th block of $y$ is partitioned into $p_i$ many sub-blocks which are blocks of $x$. This leads us to the main result of this subsection:
\begin{theorem}\label{theorem 3}
    Let $G_1$ and $G_2$ be two pseudo-Levi subgroups such that $G_1=\Pi_{i=1}^kGL_{a_i}(q)$,$G_2=\Pi_{i=1}^pGL_{b_i}(q)$ and $G_1\subseteq G_2$. Then $\mu(G_1,G_2)=(-1)^{k-p}$.
\end{theorem}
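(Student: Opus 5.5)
The plan is to transport the computation to $\Pi_n$ via Lemma \ref{poset bijection}, use the product decomposition of intervals described after that lemma, and then apply Theorem \ref{thm 2(a)} to each factor. Let $f$ be the order isomorphism of Lemma \ref{poset bijection}. Since $G_1\subseteq G_2$, we have $x:=f(G_1)\leq y:=f(G_2)$ in $\Pi_n$, and Möbius functions are preserved under poset isomorphisms, so $\mu(G_1,G_2)=\mu_n(x,y)$. By construction of $f$, $x$ has exactly $k$ blocks, one for each factor $GL_{a_i}(q)$, and $y$ has exactly $p$ blocks.

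Next I would use the interval decomposition $[x,y]\cong\Pi_{p_1}\times\cdots\times\Pi_{p_p}$, where the $j$th block of $y$ is cut into $p_j$ consecutive blocks of $x$. This holds because an element $z$ with $x\le z\le y$ is determined by choosing, independently inside each block of $y$, which of the consecutive $x$-blocks to merge. The Möbius function of a product of posets is the product of the Möbius functions of the factors. Hence $\mu_n(x,y)=\prod_{j=1}^{p}\mu_{p_j}(0,1)$, and Theorem \ref{thm 2(a)} gives
\[
\mu_n(x,y)=\prod_{j=1}^{p}(-1)^{p_j-1}=(-1)^{\sum_j p_j - p}.
\]

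Finally, every block of $x$ lies in exactly one block of $y$, so $\sum_{j=1}^p p_j=k$, and the product equals $(-1)^{k-p}$, as claimed.

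The only step requiring care is the justification of the product decomposition of $[x,y]$, which the paper states without proof. I would argue it by sending $z\in[x,y]$ to the tuple of its restrictions to the blocks of $y$. Each restriction is a consecutive-block coarsening of the corresponding piece of $x$, hence an element of $\Pi_{p_j}$ after relabelling. This map is clearly an order-preserving bijection with an order-preserving inverse. The multiplicativity of $\mu$ on products is standard: one checks that $\prod_j\mu_{P_j}$ satisfies the defining recursion.
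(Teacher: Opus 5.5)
Granting Lemma~\ref{poset bijection}, your argument is correct and is exactly the paper's proof with the omitted details supplied: you transport to $\Pi_n$, factor $[x,y]\cong\prod_j\Pi_{p_j}$, apply Theorem~\ref{thm 2(a)} blockwise and use $\sum_j p_j=k$, and your justification of the product decomposition and of the multiplicativity of $\mu$ fills in what the paper leaves implicit. Be aware, though, that this reliance on Lemma~\ref{poset bijection} is a real weakness your proof inherits from the paper rather than one you introduced: the lemma only sees block-diagonal subgroups with \emph{consecutive} blocks, but for $q\ge 4$ the centraliser of $\mathrm{diag}(\alpha,\beta,\alpha)$ with $\alpha\ne\beta$ is a third subgroup of type $GL_2(q)\times GL_1(q)$ containing $T_1$, so the actual interval $[T_1,GL_3(q)]$ is the full set-partition lattice of $\{1,2,3\}$ and $\mu(T_1,GL_3(q))=2$, not $(-1)^{3-1}=1$.
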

\begin{proof}
    Let the product of the first $r_1$ terms of $G_1$ be contained in $GL_{b_1}(q)$, the product of the next $r_2$ terms is contained in $GL_{b_2}(q)$ and so on. More precisely, we are assuming $GL_{b_j}(q)$ is the product of consecutive $r_j$ terms of $G_1$. So, from our previous discussion, it is clear that $\mu(G_1,G_2)=\Pi_{j=1}^p(-1)^{r_j-1}$. As $G_1$ has $k$ many terms, $\sum_{j=1}^p r_j=k$. Hence, $\mu(G_1,G_2)=(-1)^{k-p}$.
\end{proof}

\subsection{The set of pseudo-Levi subgroups containing a fixed torus in a conjugacy class corresponding to non-trivial partitions}
Suppose $P$ is a poset and $x,y$ are two elements of $P$. We define $x\wedge y$ to be the maximal element of the set $\{z|z\leq x;z\leq y\}$. Similarly, $x\vee y$ is defined to be the minimal element of the set $\{z|z\geq x;z\geq y\}$. If for all $x$ and $y$ in $P$, $x\wedge y$ and $x\vee y$ exists, then $P$ is called a lattice.\\
Let $\lambda$ be a partition of $n$ such that $\lambda=(\lambda_1^{m_1},\lambda_2^{m_2},\dots,\lambda_t^{m_t})$, where $\lambda_1>\lambda_2>\dots>\lambda_t$ and $m_i$ is the multiplicity of $\lambda_i$ in $\lambda$.\\
Let $G_1$ and $G_2$ be two pseudo-Levi subgroups, which are centralisers of an element of $T_{\lambda}$ such that $G_1\subseteq G_2$. Let us assume $G_1=\Pi_{i=1}^k GL_{a_i}(q^{d_i})$ and $G_2=\Pi_{j=1}^p GL_{b_j}(q^{c_j})$. It is clear that the poset $[G_1,G_2]$, which contains all the pseudo-Levi subgroups of $G_2$ that contain $G_1$ as a subgroup, is a lattice. 
\begin{lemma}\label{lemma for lambdas}
    Let $G$ be a pseudo-Levi subgroup that is the centraliser of an element of $T_{\lambda}$, such that $G=\Pi_{i=1}^rGL_{x_i}(q^{y_i})$. Then $\Sigma_{i=1}^rx_iy_i=n$.
\end{lemma}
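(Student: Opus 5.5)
The plan is to read off $\sum_{i=1}^r x_iy_i=n$ from the structure of the centraliser of a semisimple element, recalled at the start of this section. Let $s\in T_\lambda$ with $G=G_s$. Then $s$ is a semisimple element of $GL_n(q)$. Factor its characteristic polynomial over $\mathbb{F}_q$ into distinct monic irreducibles as $f_s=\prod_{i=1}^r f_i^{a_i}$, and put $d_i=\deg f_i$. Comparing degrees gives $n=\deg f_s=\sum_{i=1}^r a_id_i$ at once.

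The substance is to identify the pairs $(x_i,y_i)$ with the pairs $(a_i,d_i)$. I would do this through the primary decomposition of the natural module. Since $s$ is semisimple, $V=\mathbb{F}_q^n$ splits as $V=\bigoplus_{i=1}^r\ker f_i(s)$. Each summand $V_i=\ker f_i(s)$ has $\mathbb{F}_q$-dimension $a_id_i$. It is also a vector space over the field $\mathbb{F}_q[X]/(f_i)\cong\mathbb{F}_{q^{d_i}}$, on which $s$ acts as multiplication by the class of $X$, and over that field it has dimension $a_i$.

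Next I would compute the centraliser blockwise. Anything commuting with $s$ preserves each $V_i$, and on $V_i$ it must be $\mathbb{F}_{q^{d_i}}$-linear. Hence $G_s\cong\prod_{i=1}^r GL_{a_i}(q^{d_i})$, and under this isomorphism the $i$-th factor acts on the $a_id_i$-dimensional summand $V_i$. So writing $G=\prod_i GL_{x_i}(q^{y_i})$ via this canonical decomposition gives $x_iy_i=a_id_i=\dim_{\mathbb{F}_q}V_i$. Summing over $i$ yields $\sum_i x_iy_i=\dim V=n$.

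The only delicate point is well-definedness: an abstract group isomorphism type need not pin down the pairs $(x_i,y_i)$. For example, $GL_1(q^2)$ and $GL_1(q)$ can be isomorphic only in degenerate small cases, but such coincidences are possible in principle. I would therefore state that the notation $\prod GL_{x_i}(q^{y_i})$ in the lemma means the product as it arises from the decomposition of $V$ above, which is exactly how the paper introduces it. With that reading, the identity is just the dimension count. The hypothesis $s\in T_\lambda$ plays no role beyond guaranteeing that $s$ is semisimple.
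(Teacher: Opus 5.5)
Your argument is correct and is essentially what the paper relies on. The paper states this lemma without a separate proof; it rests on the fact recalled at the start of Section 2 that the centraliser of a semisimple $s$ with $f_s=\prod f_i^{a_i}$ is $\prod GL_{a_i}(q^{d_i})$ with $\sum a_id_i=\deg f_s=n$, and your primary-decomposition argument supplies the justification for that description.

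One correction to your side remark: $GL_1(q^2)$ and $GL_1(q)$ are never isomorphic, since their orders $q^2-1$ and $q-1$ differ. Your well-definedness concern is still genuine for $q=2$, where $GL_1(2)$ is trivial, so that for example $GL_1(2)^3\cong GL_1(2)$ as abstract groups.
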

We will make use of the following:
\begin{theorem}(Weisner's Theorem, Corollary 3.9.3 of \cite{stanleyec1})\label{Weisner's thm}
   Let $L$ be a finite lattice with at least two elements, and let $a$ be not the highest element of $L$. Then $\sum_{x:x\wedge a=0}\mu(x,1)=0,$ where $0$ is the least element and $1$ is the highest element of $L$.  
\end{theorem}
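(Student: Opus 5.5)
The proof I would give uses Möbius inversion on the interval $[0,a]$. The idea is to sum $\mu(x,1)$ over the larger set $\{x : x\wedge a\geq b\}$ rather than over $\{x : x\wedge a=0\}$, and then recover the original sum by inversion. Fix $a\in L$ with $a\neq 1$. For each $b\in[0,a]$ define
\[
f(b)=\sum_{x:\,x\wedge a=b}\mu(x,1), \qquad g(b)=\sum_{x:\,x\wedge a\geq b}\mu(x,1).
\]
Then $g(b)=\sum_{c\in[b,a]}f(c)$. This holds because every $x\in L$ has $x\wedge a\in[0,a]$, so grouping the terms of $g(b)$ by the value $c=x\wedge a$ gives exactly the sets over which the $f(c)$ are summed. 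The quantity we want is $f(0)$.

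Next I compute $g$ directly. For $b\leq a$, the defining property of the meet in a lattice gives $x\wedge a\geq b$ if and only if $x\geq b$ and $a\geq b$. The second condition holds automatically, so
\[
g(b)=\sum_{x\geq b}\mu(x,1)=\sum_{x\in[b,1]}\mu(x,1).
\]
By the defining recursion of the Möbius function, read from the top of the interval, this sum equals $\delta(b,1)$. Since $b\leq a<1$, we have $b\neq 1$, so $g(b)=0$ for every $b\in[0,a]$.

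Finally I apply Möbius inversion (the "from above" form) to the relation $g(b)=\sum_{c\geq b}f(c)$ on the finite poset $[0,a]$. This gives $f(b)=\sum_{c\in[b,a]}\mu(b,c)\,g(c)=0$ for all $b\in[0,a]$. Taking $b=0$ yields $\sum_{x:\,x\wedge a=0}\mu(x,1)=0$, which is the statement.

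The only step needing care is the identity $\sum_{x\in[b,1]}\mu(x,1)=\delta(b,1)$. The Möbius function is usually defined by the recursion $\sum_{b\leq z\leq y}\mu(b,z)=\delta(b,y)$, which sums over the first argument fixed. The identity above instead sums over the first argument with the second fixed. I would justify it by the standard fact that $\mu$ is a two-sided inverse of $\zeta$ in the incidence algebra of $L$. Everything else uses only that meets exist in $L$; the hypotheses that $L$ is finite and has at least two elements just ensure that a top element $1$ exists and that some $a\neq 1$ can be chosen.
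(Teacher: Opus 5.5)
Your proof is correct. The paper does not prove this statement; it quotes it from Stanley. There it is derived from the Möbius algebra of $L$, the semigroup algebra of $(L,\wedge)$. With $\delta_s=\sum_{t\le s}\mu(t,s)\,t$, the $\delta_s$ are orthogonal idempotents and $a=\sum_{s\le a}\delta_s$. Hence $a\cdot\delta_1=0$ whenever $a\neq 1$, and comparing coefficients with $a\cdot\delta_1=\sum_x\mu(x,1)\,(x\wedge a)$ gives the identity.

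Your argument reaches the same fact without the algebra. You use the dual recursion $\sum_{x\in[b,1]}\mu(x,1)=\delta(b,1)$, correctly justified by $\mu$ being a two-sided inverse of $\zeta$, to show that every upper sum $g(b)$ with $b\le a$ vanishes. You then invert on $[0,a]$. Both routes in fact give the stronger statement $\sum_{x\wedge a=b}\mu(x,1)=0$ for every $b\le a$. The algebraic route is a one-line computation once the Möbius algebra is set up, and it is the natural framework for related identities such as the crosscut theorem. Yours is self-contained and needs only the incidence-algebra definition of $\mu$ and the greatest-lower-bound property of $\wedge$.

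One simplification: since $g\equiv 0$ on $[0,a]$, you do not need the inversion formula. Downward induction on $[0,a]$, starting from $f(a)=g(a)=0$ and using $f(b)=g(b)-\sum_{b<c\le a}f(c)$, already forces $f\equiv 0$.
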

According to Weisner's theorem, $\mu(0,1)=-\sum_{x\neq 0,x\wedge a=0}\mu(x,1)$. We use this result several times in the computations.

\begin{lemma} \label{lemma p<k}
    If $G_1=\Pi_{i=1}^k GL_{a_i}(q^{d_i})\subseteq G_2=\Pi_{i=1}^p GL_{b_i}(q^{c_i})$, then $p\leq k$, i.e., $G_1$ has more than or the same number of factors as $G_2$.
\end{lemma}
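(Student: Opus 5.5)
The idea is to look at the centres. Each factor $GL_{a}(q^{d})$ has centre $\mathbb{F}_{q^{d}}^{\times}$, a cyclic group, so a pseudo-Levi subgroup with $k$ factors has centre isomorphic to a product of $k$ cyclic groups. I will compare centres through the inclusion $G_1\subseteq G_2$.

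The cleanest way is to pass to the algebraic group and work over $\overline{\mathbb{F}_q}$. Write $G_i=\mathbf{G}_i^F$, where $\mathbf{G}_i=C_{\mathbf{G}}(s_i)$ for semisimple elements $s_i$ of $T_\lambda$. For $\mathbf{G}=GL_n$, each $\mathbf{G}_i$ is a Levi subgroup $\prod GL_{m}$, block diagonal in a suitable basis. Its centre is a torus whose dimension equals the number of blocks over $\overline{\mathbb{F}_q}$. A factor $GL_a(q^d)$ of $G_i$ comes from an $F$-orbit of $d$ blocks of size $a$, cyclically permuted by $F$. It therefore contributes an $F$-stable central torus $(\overline{\mathbb{F}_q}^{\times})^d$ on which $F$ acts by a $d$-cycle composed with $q$-th powers. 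The $F$-orbits on the rank-$d$ character lattice of this torus number exactly one, so the number of factors of $G_i$ equals the number of $F$-orbits of blocks of $\mathbf{G}_i$. Hence
\[
k=\dim\bigl(Z(\mathbf{G}_1)\otimes\mathbb{Q}\bigr)^{F}\text{-invariants rank},
\]
that is, $k$ is the rank of the $F$-fixed part of $X(Z(\mathbf{G}_1)^\circ)\otimes\mathbb{Q}$, and similarly $p$ is the corresponding rank for $\mathbf{G}_2$.

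Next I will upgrade $G_1\subseteq G_2$ to $\mathbf{G}_1\subseteq\mathbf{G}_2$. The plan is to use that $G_1$ contains $T_\lambda$, and that $\mathbf{G}_i$ is generated by $\mathbf{T}$ together with the root subgroups it contains. A root subgroup $U_\alpha$ lies in $\mathbf{G}_i$ exactly when $\alpha(s_i)=1$. Roots in one $F$-orbit behave alike, and $G_1\cap U$-type arguments show that the roots of $\mathbf{G}_1$ lie in those of $\mathbf{G}_2$. Equivalently, one can work directly: $\mathbf{G}_i$ is determined by the partition of the $n$ eigenlines of $\mathbf{T}$ into blocks, and $G_1\subseteq G_2$ forces each block of $\mathbf{G}_1$ to lie inside a block of $\mathbf{G}_2$, as in Lemma \ref{poset bijection}. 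This step, making rigorous that inclusion of finite groups implies inclusion of block partitions, is the main obstacle. I would handle it by noting that $G_1$ contains $GL_{a}(q^{d})$, which acts irreducibly on each of its $d$ blocks of size $a$ over $\overline{\mathbb{F}_q}$ and transitively on their union up to $F$. So $G_1$ cannot stabilise a finer decomposition compatible with $\mathbf{G}_2$.

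Once each block of $\mathbf{G}_1$ lies in a block of $\mathbf{G}_2$, we get an $F$-equivariant surjection from the blocks of $\mathbf{G}_1$ onto the blocks of $\mathbf{G}_2$. This induces a surjection on $F$-orbits of blocks, which gives $p\le k$.
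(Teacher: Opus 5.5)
Your proof is correct, but it takes a different route from the paper.

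\textbf{The paper's route.} It stays inside the finite group. It takes for granted that each factor $GL_{a_i}(q^{d_i})$ of $G_1$ lies in a single factor $GL_{b_j}(q^{c_j})$ of $G_2$. It then bounds $a_id_i\le b_jc_j$ by a case analysis on $c_j$ versus $d_i$, and reads off $p\le k$ from $\sum_i a_id_i=\sum_j b_jc_j=n$.

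\textbf{Your route.} You pass to $\overline{\mathbb{F}_q}$, where $\mathbf{G}_1$ and $\mathbf{G}_2$ are block diagonal with respect to the same $n$ eigenlines of $\mathbf{T}$. You identify factors with $F$-orbits of blocks, show that the block partition of $\mathbf{G}_1$ refines that of $\mathbf{G}_2$, and count orbits via an $F$-equivariant surjection.

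\textbf{What each buys.} Your approach settles exactly the two points the paper leaves implicit. First, it explains why inclusion of the finite groups forces factor-by-factor containment. Second, it explains why every factor of $G_2$ receives at least one factor of $G_1$. The bounds $a_id_i\le b_jc_j$ together with equal total sums do not by themselves give $p\le k$; what is needed is surjectivity of the containment map, and for you this is immediate from $V=\bigoplus_\mu W_\mu$. The paper's argument is shorter and avoids algebraic groups, at the cost of those unjustified steps.

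\textbf{Points to tighten in the write-up.}
\begin{enumerate}
\item In the refinement step, make the role of the common torus explicit. $G_1\subseteq G_2\subseteq\mathbf{G}_2$ stabilises each block $W$ of $\mathbf{G}_2$ and each block $V$ of $\mathbf{G}_1$, hence each $V\cap W$. Since all blocks are spans of $\mathbf{T}$-eigenlines, $V=\bigoplus_W (V\cap W)$. Absolute irreducibility of $V$ under its factor $GL_a(q^d)$ then forces $V\subseteq W$ for a single $W$. Irreducibility without this compatible splitting would not suffice, since an irreducible submodule can sit diagonally across isomorphic summands.
\item Drop the root-subgroup variant. For a root $\alpha$ moved by $F$ one has $U_\alpha\cap G=1$, so it would need more care and adds nothing.
\item Your displayed formula for $k$ is wrong as written. $F$ acts on $X(Z(\mathbf{G}_1)^\circ)$ as $q$ times a permutation, so it fixes no nonzero vector; you mean invariants of the permutation part. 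Since you never use this formula, simply omit it.
\end{enumerate}
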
 
\begin{proof}
    For each factor $GL_{a_i}(q^{d_i})$ of $G_1$ there exists a factor of $G_2$, $GL_{b_j}(q^{c_j})$ which contains that factor of $G_1$.\\ 
    If $c_j<d_i$ and $GL_{a_i}(q^{d_i})\subseteq GL_{b_j}(q^{c_j})$, then $a_id_i|b_jc_j$ which implies $a_id_i\leq b_jc_j.$\\
    If $c_j=d_i$ and $GL_{a_i}(q^{d_i})\subseteq GL_{b_j}(q^{c_j})$, then $b_j\geq a_i$ which implies $ a_id_i\leq b_jc_j$.\\
    If $c_j>d_i$ then $\mathbb{F}_{q^{d_i}}$ is a subfield of $\mathbb{F}_{q^{c_j}}$ which means, $d_i|c_j$ and $a_i\leq b_j$. So, $a_id_i< b_jc_j$.\\
    From Lemma \ref{lemma for lambdas}, we know that $\sum_{i=1}^k a_id_i=\sum_{j=1}^p b_jc_j=n$ and for each $a_id_i$ there exists a $b_jc_j$ which is greater or equal than $a_id_i$. Therefore, the number of factors in $G_2$ has to be smaller than or equal to that of $G_1$. Hence, $p\leq k$.
\end{proof}
\begin{proposition}
    Let $G_1$ and $G_2$ be two pseudo-Levi subgroups containing $T_\lambda$ such that $G_1\subseteq G_2$ and $G_1=\Pi_{i=1}^kGL_{a_i}(q^{d_i})$ and $G_2=\Pi_{j=1}^p GL_{b_j}(q^{c_j})$. If $k=p$, then $\mu(G_1, G_2)=\Pi_{i=1}^k\mu_{P}(c_i,d_i)$ where $P$ is the poset of positive integers with respect to divisibility and $\mu_P$ is the  Möbius function of $P$. 
\end{proposition}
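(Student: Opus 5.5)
We want $\mu(G_1,G_2)$ when $G_1\subseteq G_2$ have the same number $k=p$ of factors. The plan is to show that the interval $[G_1,G_2]$ splits as a product of divisor lattices, one for each pair of matched factors, and then use the multiplicativity of the Möbius function on products of posets (as was done for $\Pi_n$ after Lemma \ref{poset bijection}).

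\textbf{Step 1: match the factors.} By the proof of Lemma \ref{lemma p<k}, each factor $GL_{a_i}(q^{d_i})$ of $G_1$ lies in some factor $GL_{b_j}(q^{c_j})$ of $G_2$ with $a_id_i\le b_jc_j$. Lemma \ref{lemma for lambdas} gives $\sum_i a_id_i=\sum_j b_jc_j=n$. With $k=p$, the assignment $i\mapsto j$ must be a bijection: if two factors of $G_1$ went into the same factor of $G_2$, some factor of $G_2$ would receive no factor of $G_1$, and the sums could not balance. After relabelling, $GL_{a_i}(q^{d_i})\subseteq GL_{b_i}(q^{c_i})$ with $a_id_i=b_ic_i$. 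The case analysis of Lemma \ref{lemma p<k} then forces $c_i\mid d_i$ and $b_i=a_id_i/c_i$. So the statement should be read as $\mu_P(c_i,d_i)$ with $c_i\mid d_i$.

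\textbf{Step 2: describe the interval.} Let $H$ be a pseudo-Levi subgroup with $G_1\subseteq H\subseteq G_2$. By Lemma \ref{lemma p<k} applied twice, $H$ also has exactly $k$ factors. By Step 1, its $i$th factor has the form $GL_{a_id_i/e_i}(q^{e_i})$ with $c_i\mid e_i\mid d_i$, sitting between the $i$th factors of $G_1$ and $G_2$. Thus $[G_1,G_2]$ embeds into $\prod_i [c_i,d_i]_P$. Containment of such subgroups is componentwise, and $GL_{m/e'}(q^{e'})\subseteq GL_{m/e}(q^e)$ holds exactly when $e\mid e'$. Hence the embedding is order-preserving in both directions, with the order reversed relative to divisibility. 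Since $\mu$ of an interval is unchanged under reversing the order and $\mu_P$ depends only on the quotient, this reversal is harmless.

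\textbf{Step 3: prove surjectivity.} This is the main obstacle. For every choice of intermediate divisors $e_i$, one must show that $\prod_i GL_{a_id_i/e_i}(q^{e_i})$, suitably embedded, really is the centraliser of a semisimple element of $T_\lambda$.

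The first approach I would try works inside each factor. The torus component of $T_\lambda$ in the $i$th factor is a product of copies of $\mathbb{F}_{q^{d_i}}^\times$ (up to the $GL_{a_i}$ structure), and $G_1$ is the centraliser of some $s$. I would replace the eigenvalue data of $s$ in block $i$ by elements that generate $\mathbb{F}_{q^{e_i}}$ exactly. The resulting element $s'$ has centraliser $GL_{a_id_i/e_i}(q^{e_i})$ in that block. The eigenvalues in different blocks must be chosen with distinct minimal polynomials, so that blocks are not merged; this is possible when $q$ is large enough, or by choosing elements of distinct degrees or distinct orbits. This step needs the most care with the field counting.

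\textbf{Step 4: conclude.} Once the isomorphism $[G_1,G_2]\cong\prod_i[c_i,d_i]_P$ is established, the Möbius function of a product is the product of the Möbius functions. This gives $\mu(G_1,G_2)=\prod_{i=1}^k\mu_P(c_i,d_i)$, which equals $\prod_i \mu_{\text{classical}}(d_i/c_i)$.
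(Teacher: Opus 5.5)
Your plan is the same as the paper's: match the factors bijectively, observe that every element of $[G_1,G_2]$ has $k$ factors $GL_{a_id_i/e_i}(q^{e_i})$ with $c_i\mid e_i\mid d_i$, identify the interval with $\prod_i[c_i,d_i]_P$, and multiply. The paper proves only the ``into'' direction. It then asserts both the splitting $\mu(G_1,G_2)=\prod_i\mu(GL_{a_i}(q^{d_i}),GL_{b_i}(q^{c_i}))$ and the identification of each factor interval with a divisor interval; it also glosses over the order reversal, which you handle correctly. Your Step 3 is exactly the step the paper omits. In your proposal it is only a sketch, and it cannot be completed as the statement stands, because surjectivity fails for small $q$. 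Take $q=2$, $n=8$, $\lambda=(4,4)$ and $G_1=T_\lambda=GL_1(2^4)\times GL_1(2^4)$, the centraliser of an element whose characteristic polynomial is a product of two distinct irreducible quartics. Let $G_2=GL_4(2)\times GL_2(2^2)$, the centraliser of the element acting as $1$ on the first block and as some $\omega\in\mathbb{F}_4\setminus\mathbb{F}_2\subseteq\mathbb{F}_{16}$ on the second. Then $k=p=2$, $(c_1,c_2)=(1,2)$ and $(d_1,d_2)=(4,4)$, so the formula predicts $\mu_P(1,4)\mu_P(2,4)=0$. Each factor interval is realised on its own. However, the tuple $(e_1,e_2)=(2,2)$ would give $GL_2(4)\times GL_2(4)$, which is not a centraliser in $GL_8(2)$, since $x^2+x+1$ is the only irreducible quadratic over $\mathbb{F}_2$. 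So $[G_1,G_2]$ consists of the five tuples $(4,4),(2,4),(4,2),(1,4),(1,2)$, with $(e_1,e_2)\le(e_1',e_2')$ iff $e_1'\mid e_1$ and $e_2'\mid e_2$. A direct computation gives $\mu(G_1,G_2)=1\neq 0$. The failure lies in the product splitting, not in the individual divisor intervals.

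To fix this you must add a hypothesis: for each $e$, the number of $i$ with $e_i=e$ must not exceed the number of monic irreducible polynomials of degree $e$ over $\mathbb{F}_q$ other than $x$. Then choose $\sigma_i\in\mathbb{F}_{q^{e_i}}^\times\subseteq Z(GL_{a_i}(q^{d_i}))$ of exact degree $e_i$, with pairwise distinct minimal polynomials. This gives an element of $Z(G_1)\subseteq T_\lambda$ with the required centraliser, which is your Step 3 made precise. The hypothesis $q>n$ suffices: at most $n/e$ of the $e_i$ equal $e$, while there are at least $(q^e-q^{e-1})/e\ge n/e$ such polynomials.

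Two smaller points also need to be made explicit. In Step 1, the per-factor inequality $a_id_i\le b_jc_j$ does not by itself prevent the sums from balancing. You need the blocks of $G_1$ inside one factor of $G_2$ to be disjoint subspaces of it, i.e.\ the block decomposition of $G_1$ must refine that of $G_2$. This holds because $s_2$ commutes with $s_1$ and centralises $GL_{a_i}(q^{d_i})$, so it acts on the $i$th block as an element of $\mathbb{F}_{q^{d_i}}$. In Step 2, injectivity of your embedding needs the centre argument from Proposition \ref{prop-2.5}. The $i$th component of $Z(H)$ is the unique subgroup of order $q^{e_i}-1$ of the cyclic group $\mathbb{F}_{q^{d_i}}^\times$, and the $i$th factor of $H$ is the centraliser of that subgroup on the $i$th block.
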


\begin{proof}
   As $G_1\subseteq G_2$, from Lemma \ref{lemma p<k} we know that $p\leq k$. If $p=k$ that means for all factors of $G_1$, $GL_{a_i}(q^{d_i})$ there exists a unique factor of $G_2$, $GL_{b_j}(q^{c_j})$ such that $a_id_i=b_jc_j$ and $GL_{a_i}(q^{d_i})\subseteq GL_{b_j}(q^{c_j})$. So, by re-numbering the factors of $G_2$ without loss of generality, we assume that $GL_{a_i}(q^{d_i})\subseteq GL_{b_i}(q^{c_i})$.
   Also, note that both $GL_{a_i}(q^{d_i})$ and $GL_{b_i}(q^{c_i})$ are subgroups of $GL_{a_id_i}(q)$. Therefore, $\mu(G_1,G_2)=\Pi_{i=1}^k\mu(GL_{a_i}(q^{d_i}), GL_{b_i}(q^{c_i}))$.\\
   From the proof of Lemma \ref{lemma p<k}, $a_id_i=b_ic_i$ implies $c_i\leq d_i$. If $c_i$ and $d_i$ ar the same then $a_i$ and $b_i$ are equal so, $GL_{a_i}(q^{d_i})=GL_{b_i}(q^{c_i})$ and $\mu(GL_{a_i}(q^{d_i}),GL_{b_i}(q^{c_i}))=1=\mu_P(c_i,d_i)$.\\
   Let us consider the situation $c_i<d_i$, i.e., $c_i|d_i$. Each element on the interval $[GL_{a_i}(q^{d_i}),GL_{b_i}(q^{c_i})]$ will be of the form $GL_{ma_i}(q^{d_i/m})$ where $c_i|(d_i/m)$. Therefore, elements inside the interval can be associated with the factors of $(d_i/c_i)$, and if two groups have an inclusion relation with each other, then their exponents are divisors of one another. Therefore, $[GL_{a_i}(q^{d_i}),GL_{b_i}(q^{c_i})]$ is isomorphic to $[c_i,d_i]$ with respect to divisibility.
\end{proof}
Now let us focus on the general situation where the number of factors of $G_2$ is less than or equal to that of $G_1$. Let us assume $G_1=\Pi_{i=1}^k GL_{a_i}(q^{d_i}) \subseteq G_2=\Pi_{i=1}^p GL_{b_i}(q^{c_i})$ and $p\leq k$. So, according to Lemma \ref{lemma p<k}, all elements in the interval $[G_1, G_2]$ have at most $k$ many factors and at least $p$ many factors. In the next proposition, we will show that there exists a unique maximal element (with respect to inclusion) which has $k$ many factors in $[G_1, G_2]$. Before diving into the proposition, we state the notion of exponent of a group and an intuitive fact as a lemma.
\begin{definition}
    For the purpose of this paper, we define the \textit{exponent} of $GL_n(q^m)$ to be the integer $m$. Let $G$ be the product of $\Pi_{i=1}^sGL_{n_i}(q^{m_i})$. Then the \textit{set of exponents} of $G$ is defined to be $\{m_1,\dots,m_s\}$. 
\end{definition}
\begin{definition}
    Let $G=\Pi_{i=1}^sGL_{n_i}(q^{m_i})$ and the set of exponents of $G$ is $\{m_1,\dots,m_s\}$. The \textit{multiplicity} of an element in the set of exponents $m_i$ is the number of factors present in $G$ whose exponent is $m_i$. 
\end{definition}
\begin{lemma} \label{lemma 8}
    Let $G$ be a maximal element in $[G_1, G_2]$ among all the elements that have the same number of factors as $G_1$. Then the set of exponents of $G$ is the same as the one of $G_2$.
\end{lemma}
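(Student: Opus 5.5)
The plan is to describe explicitly every element of $[G_1,G_2]$ that has exactly $k$ factors, and then show that maximality forces each exponent to drop to the exponent of the factor of $G_2$ containing it. Write $G_1=\Pi_{i=1}^k GL_{a_i}(q^{d_i})$ and $G_2=\Pi_{j=1}^p GL_{b_j}(q^{c_j})$.

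\textbf{Step 1 (shape of a $k$-factor element).} Let $H\in[G_1,G_2]$ have $k$ factors. Apply the argument of Lemma \ref{lemma p<k} to $G_1\subseteq H$. Every factor of $G_1$ lies in a factor of $H$, and the dimension of the factor of $H$ is at least that of the factor of $G_1$. By Lemma \ref{lemma for lambdas} the dimensions of both groups sum to $n$, and both groups have $k$ factors. Hence the assignment from factors of $G_1$ to factors of $H$ is a bijection that preserves the products $a_id_i$.

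As in the proof of the Proposition above, after renumbering we get
\[
H=\Pi_{i=1}^k GL_{a_id_i/e_i}(q^{e_i}) \quad\text{with } e_i\mid d_i .
\]
Applying the same reasoning to $H\subseteq G_2$, each factor of $H$ lies in a unique factor $GL_{b_{j(i)}}(q^{c_{j(i)}})$ of $G_2$. From the case analysis in Lemma \ref{lemma p<k} we have $c_{j(i)}\mid e_i$, and therefore $c_{j(i)}\mid e_i\mid d_i$.

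\textbf{Step 2 (maximality forces $e_i=c_{j(i)}$).} Suppose instead that $c_{j(i)}$ properly divides $e_i$ for some $i$. Let $H'$ be obtained from $H$ by replacing the factor $GL_{a_id_i/e_i}(q^{e_i})$ with $GL_{a_id_i/c_{j(i)}}(q^{c_{j(i)}})$. This new factor is the intermediate group sitting between the old factor and $GL_{b_{j(i)}}(q^{c_{j(i)}})$, obtained by restricting scalars from $\mathbb{F}_{q^{e_i}}$ to $\mathbb{F}_{q^{c_{j(i)}}}$ on the same underlying $\mathbb{F}_q$-space. Then $H\subsetneq H'\subseteq G_2$, and $H'$ still has $k$ factors.

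It remains to check that $H'$ is genuinely a pseudo-Levi subgroup containing $T_\lambda$. I would realise $H'$ as the centraliser of a semisimple element $s'$ built from an element $s$ with $H=G_s$. On the block of the $i$-th factor, replace the component of $s$ by a scalar in $\mathbb{F}_{q^{c_{j(i)}}}$ that generates this field over $\mathbb{F}_q$. Its minimal polynomial should be chosen to differ from the irreducible factors of the characteristic polynomial on the other blocks. If there are not enough field elements to arrange this, the fallback is to write $H'$ as the intersection of $G_2$ with the centraliser of such a block-scalar element.

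Once $H'$ is realised, the strict inclusion $H\subsetneq H'$ contradicts the maximality of $H$ among $k$-factor elements. Hence $e_i=c_{j(i)}$ for all $i$.

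\textbf{Step 3 (equality of exponent sets).} By Step 2, every exponent of $H$ is an exponent of $G_2$. Conversely, every factor of $G_2$ must contain at least one factor of $G_1$, since the dimensions on both sides sum to $n$ (Lemma \ref{lemma for lambdas}) and every factor of $G_1$ lies in some factor of $G_2$. Therefore every $c_j$ occurs as some $e_i$, and the two exponent sets coincide.

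I expect the main obstacle to be the realisability part of Step 2: showing that the enlarged group $H'$ is actually a centraliser of a semisimple element of $G$ containing $T_\lambda$. The constraint is that $q$ may be small, so there may be few irreducible polynomials of degree $c_{j(i)}$ over $\mathbb{F}_q$ available to choose distinct minimal polynomials. I would try the direct construction first and fall back on the intersection description if the field is too small.
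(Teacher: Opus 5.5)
Your route is the paper's. There too each factor $GL_{e_i}(q^{f_i})$ of $G$ is replaced by its restriction of scalars to the field of the $G_2$-factor containing it, and maximality forces equality. Your Step 3 usefully supplies the inclusion $\{c_j\}\subseteq\{f_i\}$ that the paper passes over with \emph{eventually}. (Step 1 is not needed. Its counting also needs each block of $H$ to be the direct sum of the blocks of $G_1$ it contains, not just the per-factor inequality.)

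However, the step you flag is exactly where the paper writes \emph{Clearly, $G^{*}\in[G_1,G_2]$}, and it is a genuine gap. Your fallback does not close it: $G_2\cap C_G(s'')=C_G(t)\cap C_G(s'')$ is the centraliser of a pair of commuting semisimple elements. It need not be the centraliser of a single element, so it need not lie in $\mathcal{T}$.

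Nor can anything close it unconditionally, because the lemma fails for small $q$. Take $q=3$, $n=4$, $\lambda=(2,1,1)$ and $G_2=GL_4(3)$. Let $G_1=T_\lambda=GL_1(9)\times GL_1(3)\times GL_1(3)=C_G(s)$ with $s=(\beta,1,-1)$ and $\beta\in\mathbb{F}_9\setminus\mathbb{F}_3$. Every element of $[G_1,G_2]$ is $C_G(t)$ with $t=(\alpha,\epsilon_1,\epsilon_2)\in C_G(T_\lambda)=T_\lambda$, since $C_G(s)=T_\lambda$.
\begin{itemize}
\item If $\alpha\in\mathbb{F}_3$, all eigenvalues of $t$ are $\pm1$, so $C_G(t)$ has at most two factors.
\item If $\alpha\notin\mathbb{F}_3$, then $C_G(t)$ is $GL_1(9)\times GL_2(3)$ or $G_1$.
\end{itemize}
So $G_1$ is the only three-factor element, hence maximal, yet its exponent set is $\{1,2\}\neq\{1\}$. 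Your $H'=GL_2(3)\times GL_1(3)\times GL_1(3)$ would need three distinct elements of $\mathbb{F}_3^{\times}$.

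What your direct construction does prove is the lemma under a largeness assumption on $q$. The paper uses such an assumption tacitly throughout: already Lemma \ref{poset bijection} needs $q-1\geq n$ to realise $GL_1(q)^n$ as a centraliser. For instance, assume that for each $c\leq n$ there are at least $n$ monic irreducible polynomials of degree $c$ over $\mathbb{F}_q$ other than $x$.

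Write $H=C_G(s)$ with blocks $V_j$, and put $c=c_{j(i)}$. Let $\gamma$ generate the unique subfield of order $q^{c}$ of the centre $\mathbb{F}_{q^{e_i}}$ of the $i$-th factor, choosing its minimal polynomial different from those of $s$ on the other blocks. Setting $s'=\gamma$ on $V_i$ and $s'=s$ elsewhere gives $C_G(s')=H'$.

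Moreover $H'\subseteq G_2=C_G(t)$. Since $t\in C_G(H)=Z(H)$, it is block diagonal with $t|_{V_i}\in\mathbb{F}_{q^{e_i}}$ generating a subfield of degree $c$. By uniqueness of the subfield of order $q^{c}$ (the fact used in Proposition \ref{prop-2.5}), $\mathbb{F}_q[t|_{V_i}]=\mathbb{F}_q[\gamma]$, so anything commuting with $\gamma$ commutes with $t$. So state such a hypothesis on $q$ explicitly and drop the fallback.
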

\begin{proof}
    Let $G=\Pi_{i=1}^kGL_{e_i}(q^{f_i})$. As $G\subseteq G_2$, for all $GL_{e_i}(q^{f_i})$ there exists a factor of $G_2$, $GL_{b_j}(q^{c_j})$ that contains $GL_{e_i}(q^{f_i})$. Therefore, $c_j$ is a factor of $f_i$, and let $d_i=f_i/c_j$. Clearly, $G\subseteq G^{*}=\Pi_{i=1}^k GL_{e_id_i}(q^{f_i/d_i})$ and $\Pi_{i=1}^k GL_{e_id_i}(q^{f_i/d_i})\in[G_1,G_2]$. As $G^{*}$ has also $k$ many factors, $G=G^{*}$. That implies $d_i=1$ and eventually, $\{f_1,\dots,f_k\}=\{c_1,\dots,c_p\}$.
\end{proof}
\begin{proposition}\label{prop-2.5}
    Let $G_1$ and $G_2$ be two pseudo-Levi subgroups containing $T_{\lambda}$ such that $G_1=\Pi_{i=1}^k GL_{a_i}(q^{d_i})\subseteq G_2=\Pi_{j=1}^p GL_{b_i}(q^{c_i})$ where $p\leq k$. There exists a unique maximal element $G_3$ in $[G_1,G_2]$ that has $k$ many factors.
\end{proposition}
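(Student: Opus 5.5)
The plan is to write down an explicit candidate for $G_3$ and show that every element of $[G_1,G_2]$ with $k$ factors is contained in it. That gives existence and uniqueness at once.

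\emph{Setting up the factor assignment.} By the proof of Lemma \ref{lemma p<k}, each factor $GL_{a_i}(q^{d_i})$ of $G_1$ lies in a unique factor $GL_{b_{j(i)}}(q^{c_{j(i)}})$ of $G_2$. In that situation $c_{j(i)}\mid d_i$. Define
\[
G_3=\Pi_{i=1}^k GL_{a_id_i/c_{j(i)}}\bigl(q^{c_{j(i)}}\bigr),
\]
where the $i$th factor is the subgroup of $GL_{a_id_i}(q)$ obtained by restricting scalars from $\mathbb{F}_{q^{d_i}}$ down to $\mathbb{F}_{q^{c_{j(i)}}}$. This is the same construction as the group $G^{*}$ in the proof of Lemma \ref{lemma 8}, applied directly to $G_1$.

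\emph{Checking that $G_3$ lies in $[G_1,G_2]$ and has $k$ factors.} By construction $G_1\subseteq G_3$, because $GL_{a_i}(q^{d_i})\subseteq GL_{a_id_i/c}(q^{c})$ whenever $c\mid d_i$. Also $G_3\subseteq G_2$: the $i$th factor sits in the $a_id_i$-dimensional $\mathbb{F}_q$-subspace belonging to the $i$th block, and that subspace is an $\mathbb{F}_{q^{c_{j(i)}}}$-subspace of the block of $G_2$. Next, $G_3$ is a pseudo-Levi subgroup containing $T_\lambda$. To see this, take $s\in T_\lambda$ with $C(s)=G_1$ and write $s=(s_i)$ blockwise, so that $s_i$ generates $\mathbb{F}_{q^{d_i}}$ over the relevant field. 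Replace $s_i$ by a generic element $t_i$ of the centre of the $i$th factor of $G_3$, that is, an element of $\mathbb{F}_{q^{c_{j(i)}}}^\times$ acting on the block. Choose the $t_i$ so that their minimal polynomials are pairwise distinct. Then $C(t)=G_3$.

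\emph{Main obstacle.} The hard step is making this genericity choice possible when $q$ is small. When several blocks share the same exponent $c$, we need pairwise distinct irreducible polynomials of degree $c$, and there may not be enough of them. I would first try choosing $t_i=s_i'$ to be a \emph{power} or norm-type image of $s_i$ lying in $\mathbb{F}_{q^{c}}$. Failing that, I would argue inside $G_2$ via intersections: $G_3$ is the intersection of $G_2$ with the centraliser of the block idempotents, and a centraliser of the form $C(s)\cdot C(s')$-intersection is again pseudo-Levi. This is the point where the paper's implicit convention, that the lattice $[G_1,G_2]$ is closed under the relevant operations (stated just before Lemma \ref{lemma for lambdas}), would be invoked.

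\emph{Maximality and uniqueness.} Let $H=\Pi_{i=1}^k GL_{e_i}(q^{f_i})\in[G_1,G_2]$ have $k$ factors. Since $G_1\subseteq H$ and both have $k$ factors, the argument in the proof of the proposition for the case $k=p$ shows that factors correspond bijectively, with $e_if_i=a_id_i$ and $f_i\mid d_i$. Since $H\subseteq G_2$, the $i$th factor of $H$ lies in the $j(i)$th factor of $G_2$, so $c_{j(i)}\mid f_i$. Hence $GL_{e_i}(q^{f_i})\subseteq GL_{a_id_i/c_{j(i)}}(q^{c_{j(i)}})$ factorwise, which gives $H\subseteq G_3$. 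So $G_3$ is the greatest element among the $k$-factor members of $[G_1,G_2]$, and in particular the unique maximal one. This is consistent with Lemma \ref{lemma 8}: the exponents of $G_3$ are exactly the $c_j$.
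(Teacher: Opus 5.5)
Your strategy of exhibiting an explicit greatest $k$-factor element differs from the paper's. The step you flag as the main obstacle, $G_3\in[G_1,G_2]$, is a genuine gap, and none of your fallbacks can close it, because with the paper's definition ($G_g$ for semisimple $g\in G$) the proposition is false for small $q$. Take $q=2$ and $\lambda=(4,4)$, so $T_\lambda=\mathbb{F}_{16}^\times\times\mathbb{F}_{16}^\times\subseteq GL_8(2)$. Write $C(u,v)$ for the centraliser of $(u,v)\in T_\lambda$. Let $\alpha$ be a root of $x^4+x+1$ and $\omega\in\mathbb{F}_4\setminus\mathbb{F}_2$, and put $G_1=C(\alpha,\alpha^{-1})=T_\lambda$ and $G_2=C(\omega,\omega)\cong GL_4(4)$.
\begin{itemize}
\item Your candidate is $G_3=GL_2(4)\times GL_2(4)$, which is not the centraliser of any element of $GL_8(2)$, since $x^2+x+1$ is the only irreducible quadratic over $\mathbb{F}_2$. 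So no choice of the $t_i$ (norms, powers or otherwise) can work.
\item $G_3=G_2\cap C(\omega,\omega^2)$, so an intersection of pseudo-Levi subgroups containing $T_\lambda$ need not be pseudo-Levi.
\item The lattice property does not help. $[G_1,G_2]$ consists of $G_1$, $C(\omega,\alpha)\cong GL_2(4)\times GL_1(16)$, $C(\alpha,\omega)\cong GL_1(16)\times GL_2(4)$, $C(\alpha,\alpha)$ and $C(\alpha,\alpha^4)$ (both $\cong GL_2(16)$), and $G_2$. This is a lattice, yet its only two-factor members are $G_1$, $C(\omega,\alpha)$ and $C(\alpha,\omega)$, and the last two are distinct maximal ones.
\end{itemize}

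What is needed is a hypothesis the paper uses tacitly when it identifies centralisers with all decompositions $\sum a_id_i$. Namely, $q$ must be large enough that every such decomposition is realised inside $T_\lambda$: for each $c$ there must be at least as many monic irreducibles of degree $c$ other than $x$ as indices $i$ with $c_{j(i)}=c$. Alternatively, read ``pseudo-Levi'' as $\mathbf{L}^F$ for $F$-stable centralisers $\mathbf{L}=C_{\mathbf{G}}(s)$ with $s\in\mathbf{G}$, where this obstruction disappears. Under the large-$q$ hypothesis your genericity choice works directly: pick $t_i$ generating the subfield $\mathbb{F}_{q^{c_{j(i)}}}\subseteq\mathbb{F}_{q^{d_i}}$ with pairwise distinct minimal polynomials, and primary decomposition gives $C(t)=G_3$.

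You must also justify ``Hence $GL_{e_i}(q^{f_i})\subseteq GL_{a_id_i/c_{j(i)}}(q^{c_{j(i)}})$''. Divisibility $c_{j(i)}\mid f_i\mid d_i$ alone does not give containment: $C(\alpha,\alpha)\neq C(\alpha,\alpha^4)$ above have equal exponents and contain the same torus. The point is that $Z(GL_{e_i}(q^{f_i}))$ centralises $GL_{a_i}(q^{d_i})$, so it lies in the cyclic group $\mathbb{F}_{q^{d_i}}^\times$ and is its unique subgroup of order $q^{f_i}-1$. Hence all the scalar fields involved are nested subfields of the same $\mathbb{F}_{q^{d_i}}$. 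This is exactly the centre argument in the paper's proof, and it also underlies your claim $G_3\subseteq G_2$.

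With these repairs your proof is a valid alternative. The paper never places a specific candidate in $[G_1,G_2]$. Existence is free, since $G_1$ has $k$ factors and the poset is finite. It then proves uniqueness by comparing two maximal elements, using Lemma~\ref{lemma 8} to pin their exponents to those of $G_2$ and the centre argument to match ranks. However, Lemma~\ref{lemma 8} silently assumes the same realisability when it puts $G^{*}$ in $[G_1,G_2]$; in the example it fails too, as $C(\omega,\alpha)$ is maximal with exponents $\{2,4\}$. Your route gives an explicit formula for the Big element and yields Lemma~\ref{lemma 8} as a corollary rather than an input, at the price of stating the realisability hypothesis openly.
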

     
\begin{proof}
    Let $G_3$ and $G^{\prime}_3$ be two elements in $[G_1,G_2]$ that have $k$ many factors and both are maximal. Let $G_3= \Pi_{i=1}^k GL_{e_i}(q^{f_i})$ and $G_3^{\prime}=\Pi_{i=1}^k GL_{e^{\prime}_i}(q^{f^{\prime}_i})$, where $\{f_1,\dots,f_k\}=\{f_1^{\prime},\dots,f_k^{\prime}\}=\{c_1,\dots,c_r\}$ which follows from Lemma $\ref{lemma 8}$. Now, $GL_{a_1}(q^{d_1})$ is isomorphic to a subgroup of $ GL_{b_1}(q^{c_1})$ and both $GL_{e_1}(q^{f_1})$ and $GL_{e^{\prime}_1}(q^{f^{\prime}_1})$ contain $GL_{a_1}(q^{d_1})$. As both $G_3$ and $G^{\prime}_3$ are maximal, so $f_1=f^{\prime}_1=c_1$. Now, $GL_{a_1}(q^{d_1})\subseteq GL_{e_1}(q^{f_1})$ which implies, $Z(GL_{e_1}(q^{f_1}))\subseteq Z(GL_{a_1}(q^{d_1}))$. As $Z(GL_{a_1}(q^{d_1}))$ is a cyclic group of order $q^{d_1}-1$, it has a unique subgroup of order $q^{c_1}-1$. Hence, $GL_{e_1}(q^{c_1})$ is isomorphic to $GL_{e^{\prime}_1}(q^{c_1})$ and which implies $e_1=e_1^{\prime}$. In this way, by deleting each factor of $G_1$ and considering the next, we can show that $G_3=G^{\prime}_3$.
\end{proof}
\begin{definition}
    The element $G_3$ in $[G_1,G_2]$ which is the largest subgroup that has the same number of factors as $G_1$ is called the \textit{Big element} of $[G_1,G_2]$. According to Lemma \ref{lemma 8}, the set of exponents of the Big element is the same as for $G_2$.
\end{definition}

\begin{lemma} \label{lemma 10}
    Let $G$ be an element in $[G_1,G_2]$ such that $G\wedge G_3=G_1$. Then the set of exponents of factors of $G$ will be the same as the set of exponents of factors of $G_1$.
\end{lemma}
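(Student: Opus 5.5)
We argue by contradiction, comparing $G$ with the Big element $G_3$ of Proposition \ref{prop-2.5}. Write $G=\Pi_{j=1}^s GL_{g_j}(q^{h_j})$. Since $G_1\subseteq G$, every factor $GL_{a_i}(q^{d_i})$ of $G_1$ lies in a unique factor $GL_{g_j}(q^{h_j})$ of $G$. As in the proof of Lemma \ref{lemma p<k}, this forces $h_j\mid d_i$. The aim is to show that $h_j=d_i$ always holds. Together with the fact that every factor of $G$ contains at least one factor of $G_1$ (the $a_id_i$ sum to $n$), this gives that the exponents of $G$ are exactly those of $G_1$.

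Suppose instead that some factor $GL_{a_i}(q^{d_i})$ of $G_1$ sits in a factor $GL_{g_j}(q^{h_j})$ of $G$ with $h_j$ a proper divisor of $d_i$. Let $GL_{e_i}(q^{f_i})$ be the factor of $G_3$ containing $GL_{a_i}(q^{d_i})$; by Lemma \ref{lemma 8}, $f_i$ is the exponent $c$ of the factor of $G_2$ containing it. Both $h_j$ and $c$ divide $d_i$, and both $G$ and $G_3$ lie below $G_2$. The key construction is a ``partial coarsening'' $H$ of $G_1$: replace only the factor $GL_{a_i}(q^{d_i})$ by $GL_{a_i d_i/m}(q^{m})$, where $m=\mathrm{lcm}(h_j,c)$ is a proper divisor of $d_i$ lying between $\mathrm{lcm}(h_j,c)$ and $d_i$ in divisibility, and leave all other factors unchanged. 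If $m=d_i$ (that is, $\mathrm{lcm}(h_j,c)=d_i$), choose instead an intermediate exponent compatible with both. Then $H$ still has $k$ factors and $G_1\subsetneq H$. One then checks two inclusions. First, $H\subseteq G_3$: the field $\mathbb{F}_{q^m}$ contains $\mathbb{F}_{q^{c}}$, so the modified factor sits inside $GL_{e_i}(q^{c})$. Second, $H\subseteq G$, since $\mathbb{F}_{q^{m}}\supseteq\mathbb{F}_{q^{h_j}}$. This yields $G_1\subsetneq H\le G\wedge G_3$, contradicting $G\wedge G_3=G_1$.

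The main obstacle is making the construction of $H$ rigorous. We must verify that $H$ is actually a pseudo-Levi subgroup containing $T_\lambda$, i.e.\ a centraliser of a semisimple element of $T_\lambda$. We must also verify the claimed inclusions at the level of actual subgroups, not just isomorphism types. For this I would use the description via centres, as in the proof of Proposition \ref{prop-2.5}. $Z(GL_{a_i}(q^{d_i}))\cong\mathbb{F}_{q^{d_i}}^\times$ has a unique subgroup $\mathbb{F}_{q^m}^\times$. Take a generator $z$ of this subgroup and let $s$ be the element of $T_\lambda$ acting as $z$ on the block of $GL_{a_i}(q^{d_i})$ and as a generic central element on the other blocks. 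Its centraliser is $H$, and it lies in $T_\lambda$ because $T_\lambda\subseteq G_1$ and $s\in Z(G_1)$.

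The delicate case is when no proper divisor $m$ of $d_i$ is a common multiple of $h_j$ and $c$. This happens for instance when $\mathrm{lcm}(h_j,c)=d_i$, and then the one-factor modification gives nothing. In that case I would first try to exploit the other factors of $G_1$ inside the same factor of $G$ or of $G_3$, merging them with equal exponent, to produce a nontrivial common lower bound. If that fails, one needs a finer argument using centralisers of $Z(G)Z(G_3)$, using that $G\wedge G_3=C(Z(G)Z(G_3))$.
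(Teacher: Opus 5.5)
Your strategy is the right one and is close to the paper's: build a coarsening of $G_1$ with $k$ factors that lies below both $G$ and $G_3$, hence below $G\wedge G_3=G_1$. As written, however, the proposal is incomplete. You leave the case $\mathrm{lcm}(h_j,c)=d_i$ open. The remedy you offer there (``choose an intermediate exponent compatible with both'') is impossible, because in that case no proper divisor of $d_i$ is a common multiple of $h_j$ and $c$. The fallback ideas in your last paragraph (merging other factors, centralisers of $Z(G)Z(G_3)$) are not carried out. Your definition of $m$ in the second paragraph is also muddled.

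The missing observation is that this case is empty. The factor $GL_{g_j}(q^{h_j})$ of $G$ lies in some factor of $G_2$. That factor contains $GL_{a_i}(q^{d_i})$, so it is the factor of $G_2$ containing $GL_{a_i}(q^{d_i})$, whose exponent is $c$. The same divisibility principle that gave you $h_j\mid d_i$ now gives $c\mid h_j$. Hence $\mathrm{lcm}(h_j,c)=h_j$, which is a proper divisor of $d_i$ by your contradiction hypothesis, so simply take $m=h_j$.

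Even more directly, with $m=h_j$ your $H$ is a $k$-factor element of $[G_1,G]\subseteq[G_1,G_2]$. So $H\le G_3$ follows from Proposition \ref{prop-2.5} alone, since the unique maximal $k$-factor element of a finite poset is the maximum among them. No compatibility with $c$ needs checking.

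With this fix, your argument is a one-factor-at-a-time, by-contradiction version of the paper's proof. The paper coarsens all factors at once: it takes $H=\prod_j H_j$ with $H_j$ the Big element of $[G_{1,j},GL_{r_j}(q^{s_j})]$. It notes that $H\le G$ and, since $H$ has $k$ factors, $H\le G_3$. So $H=G_1$, and one reads off that the exponents $s_j$ of $G$ are those of $G_1$.

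Your version has the modest merit of addressing why the coarsened group really is a centraliser of an element of $T_\lambda$, which the paper takes for granted. Note, though, that choosing ``generic'' central elements with pairwise distinct minimal polynomials tacitly requires $q$ not too small.
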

\begin{proof}
    Assume that $G=\Pi_{j=1}^tGL_{r_j}(q^{s_j})$ and $G$ is different from $G_1$. As $G_3$ is the Big element in $[G_1,G_2]$ and $G$ is not a subgroup of $G_3$ (otherwise $G\wedge G_3=G$), the number of factors of $G$ must be strictly less than that of $G_1$. Let us assume the $j$th factor of $G$, $GL_{r_j}(q^{s_j})$ contains the product of $k_j$ consecutive factors of $G_1$. Let us denote the product of these $k_j$ factors of $G_1$ by $G_{1,j}$, which is contained in $GL_{r_j}(q^{s_j})$. Let $H_j$ be the Big element of $[G_{1,j}, GL_{r_j}(q^{s_j})]$. Note that the set of exponents of $H_j$ is $\{s_j\}$ and $H_j$ has $k_j$ many factors. Let $H$ be the product $\Pi_{j=1}^tH_j$. Therefore, $H$ has $k$ many factors and according to Proposition \ref{prop-2.5}, $H\leq G_3$. Hence, $G\wedge G_3=G_1$ implies $H=G_1$. Clearly, the set of exponents of $H$ as well as, of $G$, that is $\{s_1,\dots,s_t\}$, is the same as that of $G_1$.
\end{proof}

\begin{definition}
Let $G=\Pi_{i=1}^sGL_{n_i}(q^{m_i})$, we say $G$ is \textit{consecutively distinct} if $m_i\neq m_{i+1}$ for all $i=1,\dots,s-1$, that is any two adjacent exponents are distinct.
\end{definition}
\begin{proposition} \label{prop-11}
    Let $G_1$ be consecutively distinct and $G_2$ have fewer factors than $G_1$ such that $G_1\subseteq G_2$, then $\mu(G_1,G_2)=0$.
\end{proposition}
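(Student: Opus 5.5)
We would argue by strong induction on the length of the interval $[G_1,G_2]$, using Weisner's theorem (Theorem \ref{Weisner's thm}) applied to the lattice $L=[G_1,G_2]$ with $a=G_3$, the Big element of $[G_1,G_2]$ given by Proposition \ref{prop-2.5}. Since $G_2$ has fewer factors than $G_1$, while $G_3$ has $k$ factors by definition, we have $G_3\neq G_2$. Hence $G_3$ is not the top element, and Weisner's theorem gives
\[
\mu(G_1,G_2)=-\sum_{G\neq G_1,\; G\wedge G_3=G_1}\mu(G,G_2).
\]
The goal is to show that every term on the right vanishes. In fact, we aim to show that the index set is empty, or else that each such $\mu(G,G_2)$ is zero by induction.

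Take $G\neq G_1$ in $[G_1,G_2]$ with $G\wedge G_3=G_1$. By Lemma \ref{lemma 10} and its proof, $G$ has strictly fewer factors than $G_1$. Each factor $GL_{r_j}(q^{s_j})$ of $G$ contains a block $G_{1,j}$ of $k_j$ consecutive factors of $G_1$, and the Big element $H_j$ of $[G_{1,j},GL_{r_j}(q^{s_j})]$ has all exponents equal to $s_j$. The proof of Lemma \ref{lemma 10} shows $\prod_j H_j=G_1$, so every factor of $G_{1,j}$ has exponent $s_j$. Because $G$ has fewer factors than $G_1$, some $k_j\geq 2$, and then $G_{1,j}$ contains two adjacent factors of $G_1$ with the same exponent $s_j$. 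This contradicts the hypothesis that $G_1$ is consecutively distinct. Therefore the sum is empty and $\mu(G_1,G_2)=0$ directly, so no induction is actually needed.

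The main obstacle is the step asserting that $\prod_j H_j=G_1$ forces each factor in a merged block to have exponent exactly $s_j$. Put differently, a Big element $H_j$ with $k_j$ factors and all exponents $s_j$ can equal $G_{1,j}$ only if the factors of $G_{1,j}$ already have exponent $s_j$. This is where the argument uses that $H_j$ has the same number of factors as $G_{1,j}$ and contains it. Since they have the same number of factors, each factor of $G_{1,j}$ lies in its own factor of $H_j$, and $H_j=G_{1,j}$ then forces the exponents to agree. I would also check carefully that "consecutive" here matches the adjacency used in the definition of consecutively distinct, i.e.\ that the factors merged into a single $GL_{r_j}(q^{s_j})$ are adjacent in the chosen ordering of the factors of $G_1$. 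If the merged factors need not be adjacent, I would instead use the weaker conclusion that some two merged factors share an exponent, and reinterpret the hypothesis accordingly.
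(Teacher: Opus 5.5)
Your argument is correct and is essentially the paper's proof. Both apply Weisner's theorem in $[G_1,G_2]$ with $a=G_3$ the Big element, then use Lemma \ref{lemma 10} together with consecutive distinctness to show that $G_1$ is the only $G$ with $G\wedge G_3=G_1$. Your step of reading off from $\prod_j H_j=G_1$ that each merged block has constant exponent $s_j$ is a slightly sharper form of the paper's claim that no element other than $G_1$ has the same set of exponents, and, as you note, no induction is needed.
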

\begin{proof}
According to Proposition \ref{prop-2.5}, there exists a Big element $G_3\in [G_1,G_2]$ that has the same number of factors as $G_1$. Clearly, $G_3\neq G_2$ as $G_2$ has fewer factors than $G_1$. Therefore, according to Weisner's theorem (Theorem \ref{Weisner's thm}), $\sum_{G:G\wedge G_3=G_1}\mu(G,G_2)=0$. As $G_1$ is consecutively distinct, any two adjacent factors cannot be joined to form a bigger element. Therefore, any element in $[G_1,G_2]$ except $G_1$ cannot have the same set of exponents as $G_1$, which is $\{d_1,\dots,d_k\}$. So, according to Lemma \ref{lemma 10}, $G\wedge G_3=G_1$ implies that $G=G_1$. Hence, in the equation obtained from Weisner's theorem, only $G_1$ appears as a summand. Therefore, $\mu(G_1,G_2)=0$. 
\end{proof}

Let the consecutive $m_i$ many factors in $G_1$ have the exponent $d_i$, and there are $j$ many consecutive distinct exponents present in $G_1$. Without loss of generality, the exponents are denoted by $\{d_1,\dots,d_j\}$. Consider the element $H=\Pi_{i=1}^jGL_{M_i}(q^{d_i}),$ where $M_i=\sum_{t=m_{i-1}+1}^{m_i}a_t$. 
Clearly, $H$ is the maximum element in $B$ with the fewest factors among all the elements in $B$. Also, observe that $H$ is consecutively distinct.
\newpage
\begin{theorem} \label{thm for p<k is 0}
   Let us consider the assumptions of Proposition \ref{prop-2.5}.  Let $B$ be the sub-poset containing all the elements in $[G_1,G_2]$ whose set of exponents is the same as $G_1$, that is, $\{d_1,\dots,d_k\}$ and let $H$ be its maximum element. If $\mu(H,G_2)=0$, then for all elements $G$ in $B$,  $\mu(G,G_2)=0$.
\end{theorem}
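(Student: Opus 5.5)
I will argue by downward induction on $G$ inside the finite sub-poset $B$, starting from its maximum element $H$; for $G=H$ the claim is exactly the hypothesis $\mu(H,G_2)=0$. So fix $G\in B$ with $G\neq H$ and assume $\mu(G',G_2)=0$ for every $G'\in B$ with $G<G'$.

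The main tool is Weisner's theorem (Theorem \ref{Weisner's thm}), applied to the lattice $[G,G_2]$ with $a$ the Big element $G_3(G)$ of $[G,G_2]$ given by Proposition \ref{prop-2.5}. First I need $G_3(G)\neq G_2$. Every element of $B$ has the same exponents as $G_1$, and $G\neq H$, so $G$ has strictly more factors than $H$. Since $H\leq G_2$, Lemma \ref{lemma p<k} shows that $G_2$ has at most as many factors as $H$. Hence $G_2$ has fewer factors than $G$, while $G_3(G)$ has as many factors as $G$, so $G_3(G)\neq G_2$. Weisner's theorem then gives
\[
\mu(G,G_2)=-\sum_{G'\neq G,\ G'\wedge G_3(G)=G}\mu(G',G_2).
\]

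Next I describe the summands. Apply Lemma \ref{lemma 10} to the interval $[G,G_2]$. It shows that every $G'$ with $G'\wedge G_3(G)=G$ has the same set of exponents as $G$, which is the set of exponents of $G_1$. Moreover $G'$ lies in $[G,G_2]\subseteq[G_1,G_2]$. Hence $G'\in B$ and $G'>G$. By the induction hypothesis every term of the sum vanishes, so $\mu(G,G_2)=0$. This completes the induction.

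The step I expect to need the most care is applying Lemma \ref{lemma 10} with $G$ in place of $G_1$. The lemma was stated for the fixed interval $[G_1,G_2]$, but its proof only uses Proposition \ref{prop-2.5} and the factor-merging structure. Both are available for any lower endpoint that is a pseudo-Levi subgroup containing $T_\lambda$, so I would check this generalisation explicitly. I would also confirm that "same set of exponents" is the right notion of membership in $B$, so that the induction stays inside $B$.
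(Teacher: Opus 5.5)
Your route is the paper's: top-down induction through $B$ (your downward induction on the finite poset is a cleaner form of the paper's sub-rank), Weisner's theorem with $a$ the Big element of $[G,G_2]$, and Lemma \ref{lemma 10} to keep the surviving summands in $B$. The gap is in the step that makes Weisner's theorem applicable. From $G\in B$, $G\neq H$ you conclude that $G$ has strictly more factors than $H$. Sharing the \emph{set} of exponents does not give this, because the multiset is not fixed. A factor of $G$ can carry a proper multiple of the exponent of the matching factor of $H$ without changing either set.

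Here is a concrete failure. Let $\lambda=(8,8,4,4)$ and $G_1=T_\lambda=GL_1(q^8)\times GL_1(q^8)\times GL_1(q^4)\times GL_1(q^4)$ on $V_1\oplus V_2\oplus V_3\oplus V_4$. Let $G_2=GL_8(q)\times GL_1(q^8)\times GL_2(q^4)$ on $V_1$, $V_2$, $V_3\oplus V_4$.
\begin{itemize}
\item $[G_1,G_2]$ is the divisor chain $8,4,2,1$ on $V_1$ times a two-element chain on $V_3\oplus V_4$.
\item $B$ consists of the four elements with exponent $8$ or $4$ on $V_1$.
\item The maximum of $B$ is $H=GL_2(q^4)\times GL_1(q^8)\times GL_2(q^4)$, and $\mu(H,G_2)=\mu_P(1,4)=0$.
\item $G=GL_1(q^8)\times GL_1(q^8)\times GL_2(q^4)\in B$ has three factors, as do $H$ and $G_2$.
\end{itemize}
So the Big element of $[G,G_2]$ is $G_2$ itself, and Weisner's theorem cannot be applied with $a=G_3(G)$.

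The paper's proof asserts the same factor count without justification. It implicitly identifies $H$ with the consecutively distinct merge of $G_1$, which here is $GL_2(q^8)\times GL_2(q^4)$ and is not even contained in $G_2$. So you have inherited the paper's weak point rather than closed it.

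The repair is a case split.

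\textbf{Case 1: $G$ has as many factors as $G_2$.} By Lemma \ref{lemma p<k}, every element of $[G,G_2]$ then has that many factors, including $H$, which lies in this interval because $H$ is the maximum of $B$. By the proposition for the case $k=p$, the matched factors of $G\le H\le G_2$ have exponents $c_i\mid h_i\mid g_i$. Moreover $\mu(H,G_2)=\prod_i\mu_P(c_i,h_i)$ and $\mu(G,G_2)=\prod_i\mu_P(c_i,g_i)$. Since $\mu(H,G_2)=0$, some $h_i/c_i$ is divisible by $\ell^2$ for a prime $\ell$. Then so is $g_i/c_i$, hence $\mu(G,G_2)=0$ directly, with no induction needed.

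\textbf{Case 2: $G$ has more factors than $G_2$.} Then $G_3(G)\neq G_2$. Your argument via Weisner's theorem, Lemma \ref{lemma 10} and the induction hypothesis then goes through verbatim.

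Your stated worry about Lemma \ref{lemma 10} is not a real issue. The pair $(G,G_2)$ satisfies the hypotheses of Proposition \ref{prop-2.5}, the lemma is stated for any such pair, and the paper applies it in exactly this way.
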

\begin{proof}

Let us define the notion of sub-rank on the set $B$. We define the sub-rank of $H$ to be $1$. An element $G$ in $B$ has sub-rank $i$ if the maximal length of a maximal chain of $[G,H]\cap B$ is $i$. 
 Clearly, the claim is true for $H$, i.e., for all elements of sub-rank $1$. Let $G$ be an element of $B$ that has sub-rank $i$. By the induction hypothesis, we assume that the claim holds for all the elements of sub-rank strictly less than $i$.\\
 According to Proposition \ref{prop-2.5}, there exists a Big element $G^{*}$ of $[G,G_2]$. As $G$ has more factors than $H$ and eventually than $G_2$, $G^{*}$ cannot be the same as $G_2$. So, by Weisner's theorem, $\mu(G,G_2)=-\sum_{G^{\prime}\wedge G^{*}=G}\mu(G^{\prime},G_2)$. Now, applying the Lemma \ref{lemma 10} for $G$, we get $G^{\prime}\wedge G^{*}=G\implies G^{\prime}\in B$. As $G\leq G^{\prime}$, any maximal chain of $[G^{\prime},H]$ can be extended to a chain of $[G,H]$ by adding $G$ at the beginning. So, all $G^{\prime}$ satisfying, $G^{\prime}\wedge G^{*}=G$ have sub-rank strictly less than $i$. Hence, by induction hypothesis, $\sum_{G^{\prime}\wedge G^{*}=G}\mu(G^{\prime},G_2)=0=\mu(G,G_2)$ and that proves that $\mu(G_1,G_2)=0$.
\end{proof}
Let us consider the case when $\mu(H, G_2)$ is non-zero. As $H$ is consecutively distinct, from Proposition \ref{prop-11}, the number of factors of $H$ and $G_2$ has to be the same. Without loss of generality, we assume that $G_2$ has one factor, and assume that $G_2$ is $GL_b(q^c)$. As $\mu(H,G_2)$ is non-zero, $H$ will be either $G_2$ or $GL_{a}(q^{d}),$ where $d=cp,b=ap$ for some prime $p$. Now, let us consider the case when $H=GL_a(q^d)\neq G_2$.
If $H=GL_a(q^d),$ then $G_1=\Pi_{i=1}^kGL_{a_i}(q^d)$, where $\sum a_i=a$.\\
\begin{theorem} \label{theorem 2}
    Let $G_1=\Pi_{i=1}^kGL_{a_i}(q^d)$ and $G_2=GL_b(q^c)$ where $d=cp,b=ap$ for some prime $p$, and $a=\sum_{i=i}^k a_i$. Then $\mu(G_1,G_2)=(-1)^k.$
\end{theorem}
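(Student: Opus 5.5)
The plan is to induct on $k$, using Weisner's theorem (Theorem \ref{Weisner's thm}) with the Big element as the auxiliary element $a$. Then Lemma \ref{lemma 10} cuts the Weisner sum down to elements of the same shape as $G_1$ but with fewer factors, and the recursion becomes an alternating binomial sum.

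\emph{Base case $k=1$.} Here $G_1=GL_a(q^{cp})$ and $G_2=GL_{ap}(q^c)$, and $G_1$ and $G_2$ have the same number of factors. By the proposition on the case $k=p$ (the interval is isomorphic to a divisor interval), $\mu(G_1,G_2)=\mu_P(c,cp)=-1=(-1)^1$, since $p$ is prime.

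\emph{Inductive step, $k\ge 2$.} By Proposition \ref{prop-2.5}, $[G_1,G_2]$ has a Big element $G_3$. By Lemma \ref{lemma 8}, all exponents of $G_3$ equal $c$, and I expect $G_3=\Pi_{i=1}^k GL_{pa_i}(q^c)$. Since $G_3$ has $k\ge 2$ factors and $G_2$ has one, $G_3\neq G_2$. Weisner's theorem then gives
\[
\mu(G_1,G_2)=-\sum_{G\neq G_1,\; G\wedge G_3=G_1}\mu(G,G_2).
\]
By Lemma \ref{lemma 10}, every $G$ in this sum has all exponents equal to $d$. Such a $G$ is obtained by merging consecutive factors of $G_1$, so $G=\Pi_{t=1}^{j}GL_{A_t}(q^d)$ with the $A_t$ sums of consecutive blocks of $a_i$'s.

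Conversely, I must check that each such coarsening $G$ with $j<k$ factors really satisfies $G\wedge G_3=G_1$. An element below both $G$ and $G_3$ has $k$ factors (it lies above $G_1$, and by Lemma \ref{lemma p<k} it cannot have more than $k$). Being below $G$, all its exponents are multiples of $d$, hence equal to $d$. It therefore has $k$ factors of exponent $d$ and contains $G_1$, so it equals $G_1$.

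The number of coarsenings with exactly $j$ blocks is $\binom{k-1}{j-1}$. For each one, the interval $[G,G_2]$ has the same form as in the statement with $k$ replaced by $j<k$. By the induction hypothesis, $\mu(G,G_2)=(-1)^j$. Hence
\[
\mu(G_1,G_2)=-\sum_{j=1}^{k-1}\binom{k-1}{j-1}(-1)^j=-\Big(\sum_{j=1}^{k}\binom{k-1}{j-1}(-1)^j-(-1)^k\Big)=(-1)^k,
\]
because $\sum_{j=1}^{k}\binom{k-1}{j-1}(-1)^j=-(1-1)^{k-1}=0$ for $k\ge 2$.

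The step I expect to be the main obstacle is pinning down exactly which elements satisfy $G\wedge G_3=G_1$, that is, showing they are \emph{precisely} the all-exponent-$d$ coarsenings of $G_1$. Lemma \ref{lemma 10} gives one inclusion, and the argument above gives the other. I also need to justify that every consecutive merge $\Pi GL_{A_t}(q^d)$ is genuinely a pseudo-Levi subgroup containing $T_\lambda$ and lying in $[G_1,G_2]$. For that I would argue as in Lemma \ref{poset bijection}, taking a semisimple element whose eigenvalue data in $\mathbb{F}_{q^d}$ is constant exactly on the merged blocks.
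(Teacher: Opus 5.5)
Relative to the paper's setup, your argument is correct, but it takes a different route from the paper's proof. The paper also inducts on $k$ and has the same base case. In the inductive step, however, it applies Weisner's theorem with $G^{*}=GL_{p(a_1+\dots+a_{k-1})}(q^c)\times GL_{pa_k}(q^c)$ rather than with the Big element $G_3=\Pi_{i=1}^k GL_{pa_i}(q^c)$. Every element obtained from $G_1$ by replacing one factor $GL_{a_i}(q^d)$ with $GL_{pa_i}(q^c)$, or by merging two adjacent factors among the first $k-1$, lies below $G^{*}$. So the only elements with meet $G_1$ are $G_1$ and $G_1'=\Pi_{i=1}^{k-2}GL_{a_i}(q^d)\times GL_{a_{k-1}+a_k}(q^d)$. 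The recursion collapses to $\mu(G_1,G_2)=-\mu(G_1',G_2)$, and ordinary induction finishes.

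With $G_3$, every proper consecutive coarsening survives the Weisner sum, so you need strong induction and the identity $\sum_{j=1}^{k}\binom{k-1}{j-1}(-1)^j=0$. In exchange, one inclusion of the Weisner set comes directly from Lemma \ref{lemma 10}, and you verify the other explicitly. The paper only asserts its two-term description. Your argument also has the same shape as the Big-element proof of Theorem \ref{thm for p<k is 0}.

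One small repair in your converse step. The lower bound of $k$ factors for $X\le G,G_3$ comes from Lemma \ref{lemma p<k} applied to $X\subseteq G_3$; the inclusion $X\supseteq G_1$ only gives the upper bound. Likewise, ``multiples of $d$, hence equal to $d$'' also uses that $X\supseteq G_1$ forces its exponents to divide $d$.

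A caveat applies equally to your proof and to the paper's. Both take it as given that the elements of $[G_1,G_2]$ all of whose exponents equal $d$ are exactly the $2^{k-1}$ consecutive coarsenings of $G_1$. This gives your count $\binom{k-1}{j-1}$ and the paper's two-term claim, but it is the paper's model from Lemma \ref{poset bijection} onward, not something either argument derives. The realizability you planned to check is the easy direction. It is the converse that fails for actual centralisers, since non-adjacent and Galois-twisted merges also occur.

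For instance, take $G_1=GL_1(q^2)^2\subseteq G_2=GL_4(q)$ and $\alpha\in\mathbb{F}_{q^2}\setminus\mathbb{F}_q$. The elements $(\alpha,\alpha)$ and $(\alpha,\alpha^q)$ of $G_1$ have distinct centralisers, both isomorphic to $GL_2(q^2)$ and both meeting $G_3$ in $G_1$. So for $q\ge 3$, Weisner's theorem gives $\mu(G_1,G_2)=2$ rather than $1$. Your proof is therefore as valid as the paper's, but only within that model.
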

\begin{proof}
   We will use induction on the number of factors present in $G_1$. If $G_1$ has only one factor, clearly, $\mu(G_1,G_2)=\mu_P(c,d)=-1$. According to induction hypothesis, for all groups $G_1^{\prime}$ that have $k-1$ factors, $\mu(G_1^{\prime},G_2)=(-1)^{k-1}$.
Let $G^{*}=GL_{p(a_1+\dots+a_{k-1})}(q^c)\times GL_{pa_k}(q^c)$ which is clearly not the same as $G_2$. If $G\wedge G^{*}=G_1$, $G$ can either be $G_1$ or $G^{\prime}_1=\Pi_{i=1}^{k-2}GL_{a_i}(q^d)\times GL_{a_{k-1}+a_k}(q^d)$, as the other factors cannot join with each other to avoid the meet becoming larger.\\
Therefore, according to Weisner's theorem, $\mu(G_1, G_2)+\mu(G_1^{\prime},G_2)=0,$ hence $ \mu(G_1, G_2)=-\mu(G_1^{\prime},G_2)$. From our induction hypothesis, we know, $\mu(G_1^{\prime},G_2)=(-1)^{k-1}$. Hence, $\mu(G_1,G_2)=(-1)^k$.
\end{proof}

 Now we restrict our attention to the case $G_2=H$. This implies that the set of exponents of $G_2$ and $G_1$ must be the same.
\begin{theorem} \label {theorem 3}
  Let $G_1$ and $G_2$ be two pseudo-Levi subgroups containing $T_{\lambda}$ s.t. $G_1\subseteq G_2$. Let us assume $G_1=\Pi_{i=1}^k GL_{a_i}(q^{d_i})$ and $G_2=\Pi_{j=1}^p GL_{b_j}(q^{c_j})$. Additionally, assume that the set of exponents of $G_1$ and $G_2$ is the same. Then $\mu(G_1, G_2)$ is $(-1)^{k-p}$.
\end{theorem}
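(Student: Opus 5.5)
The plan is to reduce to the split-torus computation of Theorem \ref{theorem 3} in Subsection 2.1, by showing that when $G_1$ and $G_2$ have the same set of exponents, the interval $[G_1,G_2]$ is a product of intervals of the form $\Pi_{r}$.

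\textbf{Step 1: factor-wise matching.} For each factor $GL_{b_j}(q^{c_j})$ of $G_2$, let $G_{1,j}$ be the product of the factors of $G_1$ it contains. Since $\sum a_id_i=\sum b_jc_j=n$ (Lemma \ref{lemma for lambdas}), each factor of $G_1$ lies in exactly one factor of $G_2$, and the factors inside $GL_{b_j}(q^{c_j})$ have exponents divisible by $c_j$. Because the set of exponents of $G_1$ equals that of $G_2$, I would argue (as in the setting just before the statement, where $G_2=H$ is the maximum of $B$) that every factor of $G_1$ inside $GL_{b_j}(q^{c_j})$ has exponent exactly $c_j$. The intended meaning, via the construction of $H$, is that $G_2$ is obtained from $G_1$ by merging consecutive factors with equal exponent. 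Then, exactly as in Lemma \ref{poset bijection}, $[G_1,G_2]\cong\prod_{j=1}^p [G_{1,j},GL_{b_j}(q^{c_j})]$, and $\mu(G_1,G_2)$ is the product of the factor Möbius values.

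\textbf{Step 2: each factor is a split-type interval.} Suppose $GL_{b_j}(q^{c_j})$ contains $r_j$ factors $GL_{a_t}(q^{c_j})$ of $G_1$. Every element $G$ of the interval contains $G_{1,j}$ and lies in $GL_{b_j}(q^{c_j})$. Its exponents must be multiples of $c_j$ and divisors of the exponent $c_j$ of the factors of $G_{1,j}$ it contains, so all equal $c_j$. Hence $G$ is a product of groups $GL_{\cdot}(q^{c_j})$ obtained by merging consecutive blocks of $G_{1,j}$. Viewing $GL_{b_j}(q^{c_j})$ over $\mathbb{F}_{q^{c_j}}$, this interval is exactly the split-torus situation of Lemma \ref{poset bijection}, so it is isomorphic to $\Pi_{r_j}$. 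By Theorem \ref{thm 2(a)}, its Möbius value is $(-1)^{r_j-1}$.

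\textbf{Step 3: combine.} Multiplying over $j$ gives $\prod_j(-1)^{r_j-1}=(-1)^{k-p}$, since $\sum r_j=k$, exactly as in the proof of Theorem \ref{theorem 3}.

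The main obstacle is Step 1: justifying that no factor of $G_1$ with exponent strictly larger than $c_j$ sits inside $GL_{b_j}(q^{c_j})$. Equality of exponent \emph{sets} alone does not obviously forbid, say, exponents $\{1,2\}$ in both groups with a $GL(q^2)$ factor of $G_1$ inside a $GL(q)$ factor of $G_2$. To handle this I would first try a counting or degree argument comparing $\sum a_id_i$ within each block. If that fails, I would read the hypothesis in the intended sense, $G_2=H$, which is built by merging consecutive equal-exponent runs of $G_1$, so the block structure is explicit. The claim that intermediate groups only merge consecutive factors also needs care, but it follows as in Lemma \ref{poset bijection}.
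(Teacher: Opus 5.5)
Your Steps 1--3 are exactly the paper's proof. For each factor $GL_{b_j}(q^{c_j})$ of $G_2$ it takes the consecutive factors of $G_1$ inside it, writes $\mu(G_1,G_2)$ as the product of the block values, evaluates each block as $(-1)^{r_j-1}$ via Theorem~\ref{thm 2(a)}, and multiplies. The paper does not prove your Step~1 either. Its proof opens with ``Let $c_j=d_i$'', i.e.\ it simply assumes that every factor of $G_1$ has the same exponent as the factor of $G_2$ containing it.

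Your worry is justified, and no counting argument can close the gap, because the statement is false when only the exponent \emph{sets} are assumed equal. Take $\lambda=(2,2,1)$ in $GL_5(q)$ with $q\ge 3$, and let $G_1=T_\lambda=GL_1(q^2)\times GL_1(q^2)\times GL_1(q)$ and $G_2=GL_3(q)\times GL_1(q^2)$. Here $GL_3(q)$ contains the first and third factors of $G_1$, and the second factor of $G_1$ is the $GL_1(q^2)$ of $G_2$. Both exponent sets are $\{1,2\}$ and $k-p=1$. Yet $[G_1,G_2]$ is the chain $G_1\subset GL_2(q)\times GL_1(q)\times GL_1(q^2)\subset G_2$, so $\mu(G_1,G_2)=0\ne -1$. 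Hence your fallback (the blockwise hypothesis, e.g.\ $G_2=H$) is required, and with it your argument coincides with the paper's.

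Your second caveat, that intermediate groups merge only \emph{consecutive} factors, is also well-founded. Here both your proof and the paper's inherit a defect of Lemma~\ref{poset bijection}: in the inclusion poset, non-adjacent factors of equal exponent can merge. For example, with $q\ge 4$, the element $\mathrm{diag}(\alpha,\beta,\alpha)\in T_1\subset GL_3(q)$ has centraliser $GL_2(q)\times GL_1(q)$ on the coordinate blocks $\{1,3\},\{2\}$. So $[T_1,GL_3(q)]$ has five elements and $\mu(T_1,GL_3(q))=2$ rather than $(-1)^{3-1}$.

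Under the blockwise hypothesis, each interval in your Step~2 is in fact the lattice of all set partitions of the $r_j$ factors. The correct value is therefore $\prod_j(-1)^{r_j-1}(r_j-1)!$, which agrees with $(-1)^{k-p}$ only when every $r_j\le 2$. Measured against the paper's own proof, however, your argument loses nothing.
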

 \begin{proof}
      Let $c_j=d_i$ and $GL_{b_j}(q^{d_i})$ contain the product $\Pi_{t=r_i+1}^{r_{i+1}} GL_{a_t}(q^{d_i})$. So,\\
      $\mu(G_1,G_2)=\Pi_{j=1}^{p}\mu(\Pi_{t=r_i+1}^{r_{{i+1}}}GL_{a_t}(q^{d_i}),GL_{b_j}(q^{d_i}))$. From Theorem \ref{thm 2(a)}, we know that \\
      $\mu(\Pi_{t=r_i+1}^{r_{{i+1}}}GL_{a_t}(q^{d_i}),GL_{b_j}(q^{d_i}))=(-1)^{r_{i+1}-(r_i+1)-1}$. As the total number of terms in $G_1$ is $\sum(r_{i+1}-(r_i+1)$ which is $p$, $\mu(G_1,G_2)=\Pi_{i=1}^p(-1)^{r_{i+1}-(r_i+1)-1}=(-1)^{k-p}$. 
 \end{proof}
 \subsection{Remark to design the code for determining $\mu(G_1,G_2)$}
In Theorem \ref {thm for p<k is 0}, Theorem \ref{theorem 2} and Theorem \ref{theorem 3}, we covered all possible cases for $G_1$ and $G_2$ to compute the M\"obius function of $G_1,G_2$. To implement Theorem 2 of  \cite{nam2025multiplicity}, it is not necessary to determine $H$ mentioned in Theorem \ref{thm for p<k is 0}. In a code, it is easily verifiable whether the conditions of Theorem \ref{theorem 2} or Theorem \ref{theorem 3} hold for $G_1$ and $G_2$. To check the condition of Theorem \ref{theorem 2} when $G_2$ has multiple factors, we first need to determine the products of factors of $G_1$ that are contained in a particular factor of $G_2$, and then check the condition for each factor of $G_2$. 
If the conditions of Theorem \ref{theorem 2} and Theorem \ref{theorem 3} are not satisfied, we can simply assign $0$ for $\mu(G_1,G_2)$.

\section{The case of $Sp_{2n}(q)$}
In this case any pseudo-Levi subgroup will look like $G=Sp_{2k}(q)\Pi_{i=1}^pGL_{a_i}(q^{d_i})\Pi_{i=1}^sGU_{b_i}(q^{c_i})$, where $k+\sum_{i=1}^p a_id_i+\sum_{i=1}^s b_ic_i=n$. Also, note that 
\begin{align*}
    GL_{a_i}(q^{d_i})\subseteq GL_{a_im}(q^{d_i/m})...\subseteq GL_{a_id_i}(q)\subseteq Sp_{2a_id_i}(q),\text{\quad where $m$ is a divisor of $d_i$.}
\end{align*}
A similar inclusion is also applicable to $GU$ factors. The $Sp$ factor will always be mentioned as the first term of the product.\\ 
Let $G_1=Sp_{2m}(q)\Pi_{i=1}^pGL_{a_i}(q^{d_i})\Pi_{i=1}^{p^{\prime}}GU_{a^{\prime}_i}(q^{d^{\prime}_i})\subseteq G_2=Sp_{2k}(q)\Pi_{i=1}^sGL_{b_i}(q^{c_i})\Pi_{i=1}^{s^{\prime}}GU_{b^{\prime}_i}(q^{c^{\prime}}_i)$.\\
In this section, our goal is to determine the Möbius function $\mu(G_1, G_2)$.\\ 
Clearly, $G_1$ can be factored as $G^{(1)}_1$ and $G^{(2)}_1$, such that 
\begin{align*}
    G^{(1)}_1=Sp_{2m}(q)\Pi_{i=1}^jGL_{a_i}(q^{d_i})\Pi_{i=1}^{j^{\prime}}GU_{a^{\prime}_i}(q^{d^{\prime}_i})\subseteq Sp_{2k}(q) 
\end{align*}
and  $G_1^{(2)}$ is a subgroup of $\Pi_{j=1}^sGL_{b_i}(q^{c_j})\Pi_{i=1}^{s^{\prime}}GU_{b^{\prime}_i}(q^{c^{\prime}}_j)$. As, $G_1^{(2)}$ does not have any $Sp$ factor, we can determine $\mu(G_1^{(2)},\Pi_{i=1}^sGL_{b_i}(q^{c_i})\Pi_{i=1}^{s^{\prime}}GU_{b^{\prime}_i}(q^{c^{\prime}_i}))$ by Theorem \ref {thm for p<k is 0}, Theorem \ref{theorem 2} and Theorem \ref{theorem 3}. These theorems hold similarly for the groups that are products of the factors like $GU_n(q^m).$ Therefore, we need to determine $\mu( G_1^{(1)},Sp_{2k}(q)).$ \\
It is evident that $k>m$ and $[Sp_{2m}(q)\Pi_{i=1}^jGL_{a_i}(q^{d_i})\Pi_{i=1}^{j^{\prime}}GU_{a^{\prime}_i}(q^{d^{\prime}_i}),Sp_{2k}(q)]$ is isomorphic to \\
$[\Pi_{i=1}^jGL_{a_i}(q^{d_i})\Pi_{i=1}^{j^{\prime}}GU_{a^{\prime}_i}(q^{d^{\prime}_i}),Sp_{2(k-m)}(q)]$.
Hence, from now on we will assume \\
\begin{align*}
   G_1=\Pi_{i=1}^pGL_{a_i}(q^{d_i})\Pi_{i=1}^sGU_{b_i}(q^{c_i})\subseteq G_2=Sp_{2k}(q). 
\end{align*}

If $k>\sum_{i=1}^p a_id_i+\sum_{i=1}^s b_ic_i$ then, $[G_1,G_2]=[G_1,Sp_{2\sum a_id_i}(q)]\cup[Sp_{2\sum a_id_i}(q),G_2]$ and therefore, $\mu(G_1,G_2)=0$ from Lemma \ref{lemma 10(1)}. So, we assume that $k=\sum_{i=1}^p a_id_i+\sum_{i=1}^s b_ic_i$.
\begin{lemma} \label{lemma 10(1)}
    Let $P$ be a poset and $x,y$ be two elements in $P$ such that $x\leq y$. Assume there exists an element $z\in [x,y]$ such that $z\notin \{x,y\}$ and $[x,y]=[x,z]\cup[z,y]$, then for all $t\in[z,y]\setminus\{z\}, \mu(x,t)=0.$ 
\end{lemma}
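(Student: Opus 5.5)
The strategy is to exploit the hypothesis $[x,y]=[x,z]\cup[z,y]$, which says that every element of the interval is comparable to $z$: it lies either below $z$ or above $z$. The first step is to pin down, for each $t\in[z,y]$ with $t\neq z$, the shape of the interval $[x,t]$. Any $u\in[x,t]$ lies in $[x,y]$, so either $u\le z$ or $u\ge z$. Therefore
\[
[x,t]=[x,z]\cup[z,t],\qquad [x,z]\cap[z,t]=\{z\}.
\]
In particular every element strictly above $x$ in $[x,t]$ is either in $(x,z]$ or in $[z,t]$.

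Next we compute $\mu(x,t)$ by induction on the length of $[z,t]$, that is, on $t$ moving up from $z$, using the recursion $\mu(x,t)=-\sum_{x\le u<t}\mu(x,u)$. Split the sum according to the decomposition above:
\[
\mu(x,t)=-\sum_{x\le u\le z}\mu(x,u)-\sum_{z<u<t}\mu(x,u).
\]
The first sum vanishes. This is the defining identity $\sum_{x\le u\le z}\mu(x,u)=0$, valid because $z\neq x$; this is exactly where the hypothesis $z\notin\{x,y\}$ is used. The second sum ranges over elements $u\in[z,y]\setminus\{z\}$ strictly below $t$. By the induction hypothesis each such term is $0$. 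The base case is $t$ covering $z$ in $[z,y]$, where the second sum is empty. Hence $\mu(x,t)=0$.

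The only delicate point is justifying the disjoint decomposition of $[x,t]$, namely that no element of $[x,t]$ escapes both pieces. This follows directly from $[x,t]\subseteq[x,y]$ together with the hypothesis. I would also note that the induction is well founded because the poset is finite.

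Finally, the application in the paper uses $x=G_1$, $z=Sp_{2\sum a_id_i}(q)$ and $t=y=G_2$. This $t$ lies in $[z,y]\setminus\{z\}$ because $k>\sum a_id_i$, so the lemma gives $\mu(G_1,G_2)=0$.
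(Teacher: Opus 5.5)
Your proof is correct and follows the paper's argument: induct on the height of $t$ above $z$, and split the recursion for $\mu(x,t)$ into the sum over $[x,z]$, which vanishes because $z\neq x$, and the sum over $z<u<t$, which vanishes by induction. Your explicit check that $[x,t]=[x,z]\cup[z,t]$ with $[x,z]\cap[z,t]=\{z\}$ makes precise a step the paper leaves loose (its final sum over all $w$ of smaller rank should be restricted to $z<w<t$), and the finiteness of $[x,y]$, which local finiteness guarantees, is all the well-foundedness you need.
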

\begin{proof}
       Let us define a notion of rank for this paper. Let $t$ be an element in $[z,y]$. The maximal length of a maximal chain in $[z,t]$ is called the rank of $t$. Clearly, only $z$ has rank $1$.\\
    Let $t$ be an element of rank $2$. That implies, $[z,t]=\{z,t\}$. Now, from the definition of the  Möbius function, we know that $\mu(x, t)=-\sum_{x\leq u<t}\mu(x,u).$ But, 
    \begin{align*}
      \sum_{x\leq u<t}\mu(x,u)=\sum_{x\leq u\leq z}\mu(x,u)=\sum_{x\leq u<z}\mu(x,u)+\mu(x,z)=0\\
      [\text{by the recursive definition of $\mu(x,z)$}]  
    \end{align*}
We will prove the theorem by inducting on the rank of an element. Let $t_k$ be an element in $[z,y]$ of rank $k$ and for all elements $u$ of rank strictly less than $k$, $\mu(x,u)=0$. Therefore, $\mu(x,t_k)=-\sum_{x\leq u<t_k}\mu(x,u)=-\sum_{x\leq v\leq z}\mu(x,v)-\sum_{ w: rank(w)<k} \mu(x,w)=0$. 
[The first term is $0$ by the recursive definition of $\mu(x, z)$ and the second term is zero by the induction hypothesis]. Therefore, by induction, $\mu(x,t)=0$ for all $t\in[z,y]\setminus \{z\}.$
\end{proof}  
\subsection{All exponents $d_i$ and $c_j$ are $1$}
\subsubsection{Each $d_i$ is 1 and $G_1$ has no $GU$ factor}
In this subsubsection, we assume $G_1=\Pi_{i=1}^pGL_{a_i}(q)\subseteq G_2=Sp_{2k}(q)$, where $k=\sum_{i=1}^p a_i$, and want to determine $\mu(G_1,G_2)$. Let us define a poset $\underline{\Pi_k}$ as the underlined set partitions of $[k]$, where some of the blocks are underlined. A block is a subset formed by elements within a partition. For example, an element of $\underline{\Pi_k}$ will look like $\{1\dots t_1|\underline{t_1+1\dots t_2}|\dots |t_i\dots k\}$, where $t_1<t_2\dots <t_i<k$. We define a partial ordering between two elements in $\underline{\Pi_k}$. Let $x$ and $y$ be two elements of $\underline{\Pi_k}$, and $x_1$ and $y_1$ be the collection of blocks which are underlined in $x$ and $y$ respectively.
We define $x\leq y$ in $\underline{\Pi_k}$ if blocks of $x$ are sub-blocks of $y$ and both the following conditions holds:
\begin{enumerate}
    \item For all $a\in[k]$, if $a$ lies in an underlined block of $x$ then $a$ also lies in an underlined block of $y$. 
    \item $x\setminus y_1\leq y\setminus y_1$ in $\Pi_{k-[y_1]},$ where $[y_1]$ is the number of elements present in $y_1$ and $x\setminus y_1$ and $y\setminus y_1$ are partitions of $k-[y_1]$ obtained by removing the underlined blocks of $y$ from $x$ and $y$ respectively.
\end{enumerate} 
Now, we will identify each element of $[G_1,G_2]$ as an element of $\underline{\Pi_k}$ and compute the M\"obius
function for the set $\underline{\Pi_k}$. Each element of the form $\Pi_{i=1}^rGL_{e_i}(q)$ in $[G_1,G_2]$ will be identified with $\{1\dots t_1|t_1+1\dots t_2|t_2+1\dots t_3|\dots |t_{r-1}+1\dots k\}, $ where $t_j=\sum_{i=1}^je_i$. For elements with the $Sp$ factor, the $Sp$ factor is formed by absorbing some $GL_n(q)$. Therefore, we will create an underlined partition of $[k]$, where a block will get underlined if the corresponding $GL_n$ is absorbed in the $Sp$ factor. 
Therefore, it is enough to determine $\mu(0,1)$ in $\underline{\Pi_k}$ where $1\dots a_1|a_1+1\dots a_1+a_2|\dots|a_1+a_2+\dots +a_{p-1}+1\dots k$ is $0$ and $1$ is the element $\underline{1\dots k}$.\\
Let $x$ be an element of $\underline{\Pi_k}$ in $[0,1]$ with $x_1$ many underlined blocks and $x_2$ many non-underlined blocks. We want to determine the value of $\mu(x,1)$.\\
If $|x_2|=0$ then $x=1$ and $\mu(x,1)=1$. So, let us assume that $|x_2|\geq 1$.\\
An underlined colouring of $x$ is an operation of underlining the non-underlined blocks of $x$ and assigning a colour to the rest of each non-underlined block of $x$ with $t$ many colours, where $t$ is larger than $|x_1|+|x_2|$. So, for each non-underlined block of $x$, there are $t+1$ possible operations ($t$ many colourings and underlining). So, the number of distinct underlined colorings of $x$ is $(t+1)^{|x_2|}$, where $|x_2|$ is the number of blocks of $x_2$. Now, we will compute the number of distinct underlined colourings by considering elements in $[x,1]$. \\
Let $y$ be an element of $[x,1]$. A colouring of $y$ is called a distinct colouring of $y$ if any two adjacent non-underlined blocks have distinct colours. Now, for each underlined colouring of $x$, we underline new blocks and produce an element $y$ which is greater than $x$. Now, if we join the adjacent blocks of the same colour, we obtain a distinct colouring of an element $y$ that is greater than $x$. Conversely, each distinct colouring of $y$ can be obtained from an underlined colouring of $x$ for all $y\geq x$. Therefore, the number of underlined colourings of $x$ is the sum of the number of distinct colourings of $y$ in $[x,1]$. Now if $|y_2|\geq 1$ then the number of distinct colorings of $y$ will be $t(t-1)^{|y_2|-1}$  and $0$ if  $|y_2|=0$.\\
Let $f(n)$ be defined by such that $f(n)=t(t-1)^{n-1}$ if $n\geq 1$ and $f(0)=0$. Therefore, from the above discussion,
\begin{align*}
    &(t+1)^{|x_2|}=\sum_{y\geq x} f(|y_2|)\\
    \implies &f(|x_2|)=\sum_{y\geq x} \mu(x,y)(t+1)^{|y_2|}  \text{\quad[by definition of M\"obius transformation]}\\
    \implies &t(t-1)^{|x_2|-1}=\sum_{y\geq x} \mu(x,y)(t+1)^{|y_2|} \text{\quad [As $|x_2|\geq 1$]}.
\end{align*}
 
Now we replace $t+1$ by $t^{\prime}$ and we get 
\begin{align*}
    &(t^{\prime}-1)(t^{\prime}-2)^{|x_2|-1}=\sum_{y\geq x} \mu(x,y)(t^{\prime})^{|y_2|}
\end{align*}
Now, $|y_2|=0$ if and only if $y=1$. So, if we equate the constant term of the last equation, we get $(-1)(-2)^{|x_2|-1}=\mu(x,1)$. Therefore, $\mu(x,1)=(-1)^{|x_2|}2^{|x_2|-1}$, when $|x_2|\geq 1$ ie, $x\neq 1$.\\
So, $\mu(G_1,G_2)=(-1)^{p}2^{p-1}$, where $G_1$ has no $Sp$ factor and it is the product of $p$ many $GL_a(q)$. \\
\subsubsection{$G_1$ has both $GL$ factors and $GU$ factors}
In this subsubsection, we assume $G_1=\Pi_{i=1}^pGL_{a_i}(q)\Pi_{i=1}^uGU_{b_i}(q)$ and $G_2$ is $Sp_{2k}(q),$ where $k=\sum_{i=1}^p a_i+\sum_{i=1}^u b_i$. Let $s_i=\sum_{j=1}^ia_j; t_i=\sum_{j=1}^i b_j$, i.e., $s_i$ is the $i$th partial sum of $a_i$ and $t_i$ is the $i$th partial sum of $b_i$.
Consider the product poset of underlined set partitions of $[k]$, $\underline{\Pi_{s_p}}\times \underline{\Pi_{t_u}}$. Let $x$ and $y$ be two elements of $\underline{\Pi_{s_p}}\times \underline{\Pi_{t_u}}$. Therefore, $x=(x_1,x_2)$ and $y=(y_1,y_2)$ where $x_1,y_1\in\underline\Pi_{s_p}$ and $x_2$ and $y_2$ lies in $\underline\Pi_{t_u}$. We define $x\leq y$ if and only if $x_1\leq y_1$ in $\underline{\Pi_{s_p}}$ and $x_2\leq y_2$ in $\underline{\Pi_{t_u}}$.\\
By a similar identification as done for the case where $G_1$ has no $GU$ factor, it is clear that $[G_1,G_2]$ is isomorphic (as poset) to $[0,1]$, where \\
$0=(1\dots s_1|s_1+1\dots s_2|s_2+1\dots s_3|\dots|s_{p-1}+1\dots s_p|,|1\dots t_1|\dots |t_{u-1}+1 \dots t_u)$ ,\\
and $1=(\underline{1\dots s_p},\underline{1\dots t_u})$.\\
Let $x$ be an element in $[0,1]$. We define an underlined colouring of $x$ as an operation of underlining the non-underlined blocks of $x$ and assigning colours to the non-underlined blocks of the first tuple by $t$ many colours and assigning colours to the non-underlined blocks of the second tuple by another $t$ many colours, where $t>p,u$. Let $|x_1|$ and $|x_2|$ be the number of non-underlined blocks in the first and second tuples, respectively. So, the number of distinct underlined colourings of $x$ is $(t+1)^{|x_1|+|x_2|}$, because for each non-underlined blocks, there are $t+1$ many possiblities, ($t$ many colors and get underlined).\\
Now, we compute the same number by considering elements in $[x,1]$. Let $y$ be an element in $[x,1]$. A colouring of $y$ is called a distinct colouring if any two adjacent non-underlined blocks are coloured with distinct colours. For any underlined colouring of $x$, if we join the adjacent same coloured blocks, we will get a distinct colouring of an element in $[x,1]$. Similarly, each distinct colouring of an element $y$ in $[x,1]$ is obtained by joining the same coloured adjacent blocks and underlining the non-underlined blocks of $x$. So, $\sum_{y\geq x}$ {number of distinct colouring of $y$ $=$ $(t+1)^{|x_1|+|x_2|}$.\\
\newpage
Let $f(n)$ be the function such that $f(n)=t(t-1)^{n-1}$ if $n\geq 1$ and $f(0)=0$. If
$|y_1|$ and $|y_2|$ are the number of non-underlined blocks present in the first and second tuple of $y$, respectively, then the number of distinct colourings of $y$ will be $f(|y_1|)f(|y_2|)$. 
Therefore, 
\begin{align*}
    &(t+1)^{|x_1|+|x_2|}=\sum_{y\geq x} f(|y_1|) f(|y_2|)\\
    \implies& f(|x_1|)f(|x_2|)=\sum_{y\geq x}\mu(x,y) (t+1)^{|y_1|+|y_2|}\\
    \implies & t(t-1)^{|x_1|-1} t(t-1)^{|x_2|-1}=\sum_{y\geq x}\mu(x,y) (t+1)^{|y_1|+|y_2|}\quad[\text {assuming $|x_1|,|x_2|>1$}.]
\end{align*}

If $|x_1|=1$ or $|x_2|=1$, we will get back to the case where $G_1$ has either only $GL$ factors or only $GU$ factors.\\
Now if we assume $t+1=t^{\prime}$, the last equation becomes 
\begin{align*}
    (t^{\prime}-1)(t^{\prime}-2)^{|x_1|-1}(t^{\prime}-1)(t^{\prime}-2)^{|x_2|-1}=\mu(x,1)+\sum_{y\geq x;y\neq 1}\mu(x,y){t^{\prime}}^{(|y_1|+|y_2|)}
\end{align*}
We have $|y_1|+|y_2|=0$ if and only if $y=1$. Therefore, by equating the constant term of the last equation, we get $\mu(x,1)=(-1)^{|x_1|}2^{|x_1|-1}(-1)^{|x_2|}2^{|x_2|-1}$.
Hence, $\mu(G_1,G_2)=(-1)^{p+u}2^{p+u-2}$, where $G_1$ is the product of $p$ many $GL_n(q)$ and $u$ many $GU_n(q)$.

\subsection{Not all $d_i$ and $c_j$ are $1$}
Let us recall our assumption of $G_1$ and $G_2$: $G_1=\Pi_{i=1}^pGL_{a_i}(q^{d_i})\Pi_{i=1}^uGU_{b_i}(q^{c_i})$ and $G_2=Sp_{2k}(q)$, where $k=\sum_{i=1}^p a_id_i+\sum_{i=1}^u b_ic_i$ and our main goal is to determine $\mu(G_1,G_2)$.
We have already proved that $\mu(G_1,G_2)$ when all $d_i$ and $c_i$ are $1$. Now, we will determine the general case. It is clear that $[G_1, G_2]$ is a lattice. So, we can use Weisner's theorem (Theorem \ref{Weisner's thm}) for the computation of $\mu(G_1,G_2)$. Let us start the computation by introducing a new notation.\\
\begin{definition}
    Let $y$ be an element of $[G_1, G_2]$. A factor of $y$ of the form $GL_n(q)$ or $GU_m(q)$ is called a \textit{unit factor} for some natural numbers $n$ and $m$, and factors of the form $GL_n(q^d)$ or $GU_m(q^d)$ where $d>1$ are called \textit{non-unit factors} of $y$. For example in $y=GL_2(q)GL_3(q^2)GU_2(q)GU_3(q^2)$, the unit factors are $GL_2(q)$ and $GU_2(q)$ and the non-unit factors are $GL_3(q^2)$ and $GU_3(q^2)$. 
 \end{definition}
\begin{proposition}\label{prop 2.9}
    Assume that $G_1=\Pi_{i=1}^pGL_{a_i}(q^{d_i})\Pi_{i=1}^uGU_{b_i}(q^{c_i})$ and $G_2=Sp_{2k}(q)$, where $k=\sum_{i=1}^p a_id_i+\sum_{i=1}^u b_ic_i$ and $G_1$ has no unit factor. Let $x$ be an element in $[G_1,G_2]$ such that $x\wedge a=G_1$, where $a=GL_w(q)GU_v(q)$. Then $x=G_1$.
\end{proposition}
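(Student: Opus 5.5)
Here $a=GL_w(q)GU_v(q)$ is the element of $[G_1,G_2]$ obtained by descending every $GL$ factor of $G_1$ to exponent $1$ and merging them into one $GL_w(q)$ with $w=\sum a_id_i$, and likewise merging all $GU$ factors into $GU_v(q)$ with $v=\sum b_ic_i$. The plan is to show that any $x\neq G_1$ in $[G_1,G_2]$ shares with $a$ a common element strictly above $G_1$. Then $x\wedge a\neq G_1$, which proves the contrapositive.

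Write $x=Sp_{2m}(q)\prod_j GL_{e_j}(q^{f_j})\prod_l GU_{g_l}(q^{h_l})$. Each factor of $G_1$ sits inside exactly one factor of $x$. Since $x\neq G_1$ and $x\supseteq G_1$, at least one factor $F$ of $G_1$ is strictly enlarged in $x$, so its ambient factor $X$ in $x$ is strictly bigger than $F$. We split into cases according to the type of $X$.

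\textbf{Case $X=Sp_{2m}(q)$.} Take a factor $F=GL_{a_i}(q^{d_i})$ (or a $GU$ factor) of $G_1$ absorbed into $X$. We have $d_i>1$ because $G_1$ has no unit factor. Replace $F$ by $GL_{a_id_i}(q)$, or by $GL_{a_i}(q^{d_i/r})$ for a prime $r\mid d_i$, and keep all other factors of $G_1$ unchanged. The result $z$ contains $G_1$ strictly and lies in $X$, so $z\le x$. It also lies in $a$, because $GL_{a_id_i}(q)\subseteq GL_w(q)$ via the chain displayed at the start of Section 3.

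\textbf{Case $X=GL_{e}(q^{f})$ (or $GU$).} Suppose first that $X$ contains a single factor $F=GL_{a_i}(q^{d_i})$ with $f<d_i$. Let $z$ be $G_1$ with $F$ replaced by $X$. Then $z\le x$ and $z\le a$, since $X\subseteq GL_{ef}(q)\subseteq GL_w(q)$. Otherwise $X$ contains several factors of $G_1$ of the same type. Here again replace those factors by $X$ to get $z$. This $z$ lies below $x$, and it lies below $a$ because a $GL$ factor of any exponent embeds in $GL_{ef}(q)\subseteq GL_w(q)$, where all $GL$ parts are merged.

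One point must be checked: $GL$ and $GU$ factors never merge together except inside the $Sp$ factor. This ensures that the $X$ in the second case is really of a single type and embeds in the corresponding factor of $a$.

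In every case $G_1<z\le x\wedge a$, which contradicts $x\wedge a=G_1$. Hence $x=G_1$.

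The main obstacle is making sure that the element $z$ built in each case is a genuine pseudo-Levi subgroup containing $T_\lambda$, that is, an element of $[G_1,G_2]$. For this I would use the fact that intervals in this lattice are determined factorwise, as in Lemma \ref{lemma p<k} and Proposition \ref{prop-2.5}: if a factor $X$ of $x$ lies in $[G_1,G_2]$, then the element of $G_1$ with only the relevant factors replaced by $X$ is again a centraliser of a suitable semisimple element of $T_\lambda$.
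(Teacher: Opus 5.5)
Your strategy is the paper's: produce $z$ with $G_1<z\le x$ and $z\le a$. Your $Sp$ case is the paper's Case~2; the paper unwinds the whole $Sp$ factor into unit factors, while you descend one absorbed factor. Your $GL/GU$ case is a factorwise version of the paper's Case~1, which simply asserts that an $x$ without $Sp$ factor satisfies $x\le a$.

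The gap is the point you flagged as ``must be checked'' and then used anyway. You assume that outside the $Sp$ factor a $GL$-type factor of $G_1$ can only enlarge into $GL$-type factors, and that any such enlargement lies in the $GL_w(q)$ of $a$. The paper takes the same thing for granted. For actual pseudo-Levi subgroups it is false, so the step cannot be repaired and the statement itself fails. By contrast, what you call the main obstacle, namely that $z$ is again a centraliser of an element of $T_\lambda$, is harmless for $q$ not too small; the real problem is $z\le a$.

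Here is a counterexample. Take $q\ge 4$ and let $G_1=GL_1(q^2)=T$ be the maximal torus of order $q^2-1$ in $Sp_4(q)$. It stabilises a decomposition $V=U\oplus U'$ into totally isotropic planes and acts $\mathbb{F}_{q^2}$-linearly on $U$, so $a=GL_2(q)=GL(U)$. Let $s\in T$ correspond to $\beta\in\mathbb{F}_{q^2}^\times$ with $\beta^{q+1}=1\ne\beta^2$. Then $x=C_G(s)\cong GU_2(q)$ contains $T$. In $x$ the planes $U,U'$ are two isotropic $\mathbb{F}_{q^2}$-lines, so $x\cap a$ is their joint stabiliser in $GU_2(q)$, which is $T$. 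Thus $x\wedge a=G_1$ with $x\ne G_1$. Here $[G_1,G_2]=\{T,GL_2(q),GU_2(q),Sp_4(q)\}$, so $\mu(G_1,G_2)=1$, and the Remark that follows fails as well.

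Your ``several factors of the same type'' subcase also breaks. Take $G_1=GL_1(q^2)^2$ in $Sp_8(q)$, acting on $(U_1\oplus U_1')\perp(U_2\oplus U_2')$. Both $GL(U_1\oplus U_2)$ and $GL(U_1\oplus U_2')$ contain $G_1$ and are candidates for $a$. Whichever is chosen, a $GL_2(q^2)$ acting on the other sum meets $a$ only in $G_1$.

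The chain $GU_b(q^c)\subseteq GU_{bc}(q)$, which your $Sp$ case uses for $GU$ factors, exists only for odd $c$. For example, the Coxeter torus $GU_1(q^2)$ of $Sp_4(q)$ lies in no $GU_2(q)$, so there $a\notin[G_1,G_2]$.

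Your argument, like the paper's, is complete only in a simplified combinatorial model. In that model, factors coarsen solely along $GL_a(q^d)\subseteq GL_{am}(q^{d/m})$ and its $GU$ analogue, and all merges of one type are compatible with a single $GL_w(q)GU_v(q)$. It does not prove the proposition for the poset of pseudo-Levi subgroups containing $T_w$.
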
 
\begin{proof}
     \textbf{Case 1}. $x$ has no $Sp$ factor.\\
     If $x$ has no $Sp$ factor then $x\leq a$ and so, $x\wedge a=x$ which means $x=G_1$.\\
     \textbf{Case 2}. $x$ has an $Sp$ factor.\\
     Let $x=Sp_{2r}(q)\Pi GL_{e_i}(q^{f_i})\Pi GU_{g_i}(q^{h_i})$ in $[G_1,G_2]$. The $Sp$ factor is formed by absorbing factors of the form $GL_{r_i}(q)$ and $GU_{r_j}(q)$.
     So, there is an element $z\leq x$ which doesn't have any $Sp$ factor and has unit factors that form the $Sp$ factor of $x$. Clearly, $z\leq a$, and it differs from $G_1$, since $G_1$ has no unit factor. So, $x\wedge a=z\neq G_1$.\\
     Therefore, $x\wedge a=G_1$ if and only if $x=G_1$.
 \end{proof}
 \newpage
 \begin{remark} \label{remark for d_i>1}
     If all $d_i,c_j>1$ then, Proposition \ref{prop 2.9} and Weisner's theorem (Theorem \ref{Weisner's thm}) together imply that $\mu(G_1,G_2)=0$.
 \end{remark}

Now, let us assume that all $d_i,c_j\geq 1$ but not all of them are $1$. The case where all $c_i$ and $d_j$ are $1$ is already done in the previous subsection.\\
Assume that $a=GL_w(q)GU_v(q)$, where $w=\sum_{i=1}^p a_id_i,v=\sum_{i=1}^u b_ic_i$. Before going to further details, let us state some more definitions. 
 \begin{definition}
     The \textit{length} of an element $x\in [G_1,G_2]$ is the number of unit factors present in $x$. For example, the length of $GL_2(q)GL_3(q^2)GU_2(q)GU_3(q^2)$ is $2$ as it has two unit factors, $GL_2(q)$ and $GU_2(q)$.
 \end{definition}
 
\begin{definition}
    Let $z$ be an element in $[G_1,G_2]$. The \textit{dimension} of a factor of $z$ of the form $GL_m(q^n)$ is defined as the integer $mn$. The \textit{dimension of an $Sp$ factor} of the form $Sp_{2r}(q)$ is $r$.\\
    The \textit{unit dimension} of $z$ is defined as the sum of the dimensions of the unit factors of $z$. Similarly, the \textit{non-unit dimension} of $z$ is the sum of the dimensions of the non-unit factors of $z$. The \textit{$Sp$ rank} of $z$ is the dimension of the $Sp$ factor of $z$.
\end{definition}

\begin{definition}
    For any $z$ in $[G_1, G_2]$, the sum of unit dimension, non-unit dimension and $Sp$ rank is called the \textit{dimension} of $z$.
\end{definition}
It is clear that for any $z$ in $[G_1,G_2]$, the dimension of $z$ is $k$, where $G_2=Sp_{2k}(q)$ and $G_1=\Pi_{i=1}^pGL_{a_i}(q^{d_i)}\Pi_{i=1}^uGU_{b_i}(q^{c_i})$ and $k=\sum_{i=1}^p a_id_i+\sum_{i=1}^u b_ic_i$. 

\begin{lemma} \label{lemma 2.11}
    Recall that  $G_1=\Pi_{i=1}^pGL_{a_i}(q^{d_i)}\Pi_{i=1}^uGU_{b_i}(q^{c_i})$ and $a=GL_w(q)GU_v(q)$, where $w=\sum a_id_i,v=\sum b_ic_i$ and $G_2=Sp_{2k}(q),k=w+v$. Then, if $x\wedge a=G_1$, then the non-unit dimension of $x$ is the same as that of $G_1$.
\end{lemma}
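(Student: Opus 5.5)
The plan is to argue by contradiction, following the pattern of Proposition \ref{prop 2.9}. I will compare the non-unit factors of $x$ with those of $G_1$, and show that any loss of non-unit dimension produces an element strictly above $G_1$ that lies below both $x$ and $a$.

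First record the obvious inequality. Every factor of $G_1$ is contained in exactly one factor of $x$. A non-unit factor $GL_{e}(q^{f})$ or $GU_{e}(q^{f})$ of $x$ (with $f>1$) can only contain factors $GL_{a_i}(q^{d_i})$ or $GU_{b_i}(q^{c_i})$ with $f\mid d_i$, so $d_i>1$. Hence every non-unit factor of $x$ is made up entirely of non-unit factors of $G_1$. Since dimension is additive, the non-unit dimension of $x$ is at most the non-unit dimension of $G_1$. Equality fails exactly when some non-unit factor of $G_1$ is absorbed into a unit factor of $x$ or into the $Sp$ factor of $x$.

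Now suppose, for contradiction, that some non-unit factor $F=GL_{a_i}(q^{d_i})$ (or $GU_{b_i}(q^{c_i})$) of $G_1$ is absorbed in this way, and let $Y$ be the unit factor or the $Sp$ factor of $x$ containing it. Using the chain of inclusions $GL_{a_i}(q^{d_i})\subseteq GL_{a_id_i}(q)$ (and its $GU$ analogue), define $z$ as $G_1$ with $F$ replaced by $GL_{a_id_i}(q)$ (resp.\ $GU_{b_ic_i}(q)$). Then $z\neq G_1$ and $z\geq G_1$. Also $z\leq a$, since $GL_{a_id_i}(q)$ sits in $GL_w(q)$ and all other factors of $G_1$ already lie in $a$.

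The remaining task, and the main obstacle, is to show that $z\leq x$, i.e.\ $GL_{a_id_i}(q)\subseteq Y$. If $Y$ is a unit $GL_m(q)$ (resp.\ $GU_m(q)$) containing $GL_{a_i}(q^{d_i})$, I would use the same centraliser reasoning as in Proposition \ref{prop-2.5}. The relevant fixed torus $T_\lambda$ determines $F$ as the centraliser, inside $Y$, of a semisimple element. Passing to the $\mathbb{F}_q$-span of its eigenspace gives the block $GL_{a_id_i}(q)\subseteq Y$, which is again a pseudo-Levi subgroup containing $T_\lambda$.

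If $Y$ is the $Sp$ factor, I would invoke the description used in Proposition \ref{prop 2.9}: the $Sp$ factor is formed by absorbing unit factors, via $GL_{a_id_i}(q)\subseteq Sp_{2a_id_i}(q)$. So $F$ reaches $Y$ through $GL_{a_id_i}(q)$, and this intermediate group lies in $Y$.

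Granting this, $G_1<z\leq x\wedge a$, which contradicts $x\wedge a=G_1$. So no non-unit factor of $G_1$ is absorbed into a unit or $Sp$ factor of $x$, and the non-unit dimensions of $x$ and $G_1$ agree.
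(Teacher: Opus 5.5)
There is a genuine gap, and it sits exactly at the step you flagged as the main obstacle. In the unit-factor case you assume that a unit factor $Y$ of $x$ containing $F=GL_{a_i}(q^{d_i})$ must also contain $GL_{a_id_i}(q)$. This fails when $Y$ is unitary, and that case does occur.

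If $d_i$ is even, the centre $\mathbb{F}_{q^{d_i}}^\times$ of $F$ contains $\mu_{q+1}$. For $\zeta\in\mu_{q+1}\setminus\{\pm1\}$, the corresponding central element acts on $V_F=W_F\oplus W_F^*$ with the self-reciprocal minimal polynomial $(X-\zeta)(X-\zeta^q)$, so its centraliser is $GU_{a_id_i}(q)\supseteq F$. But $GL_{a_id_i}(q)=GL(W_F)$ does not commute with multiplication by $\zeta\notin\mathbb{F}_q$, so your $z$ is not below $x$.

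This cannot be patched, because the lemma itself fails here. Work in $Sp_4(q)$ with $q\ge 4$, and let $G_1=T=GL_1(q^2)$ be the centraliser of a regular $\lambda\in\mathbb{F}_{q^2}^\times$ acting on $W\oplus W^*$ with $\dim W=2$. The centralisers of elements of $T$ are:
$T$; $GL_2(q)$ (for $\lambda\in\mathbb{F}_q^\times\setminus\{\pm1\}$); $GU_2(q)$ (for $\lambda\in\mu_{q+1}\setminus\{\pm1\}$); and $Sp_4(q)$.
So $a=GL_2(q)$, and $x=GU_2(q)$ satisfies $x\wedge a=G_1$, yet its non-unit dimension is $0\neq 2$. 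Accordingly $\mu(T,Sp_4(q))=1$, which contradicts Remark~\ref{remark for d_i>1}.

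Your explicit $z$ is a more concrete version of the paper's argument. The paper takes a minimal $y\in[G_1,x]$ of smaller non-unit dimension, unwinds its $Sp$ factor, and concludes $y\le a$ because $y$ has no $Sp$ factor. That last inference rests on the same unstated assumption, namely that $GL$-type factors can only be absorbed into $GL$-type factors or into the $Sp$ factor. It fails for $y=GU_2(q)$ above. So you are not missing an idea from the paper; the statement needs an extra hypothesis.

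A related defect is the ``$GU$ analogue'' you invoke. The inclusion $GU_b(q^c)\subseteq GU_{bc}(q)$ holds only for odd $c$; for instance $q^2+1=|GU_1(q^2)|$ does not divide $|GU_2(q)|$. So for even $c_i$ your $z$ need not contain $G_1$.

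If all $d_i$ and $c_j$ are odd, both obstructions disappear. For odd $d$ one has $\gcd(q^{d}-1,q+1)\mid 2$, which excludes a $GL$ factor sitting inside a unitary factor. Under that hypothesis your construction of $z$ works.
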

\begin{proof}
    As $x\geq G_1$ and the dimensions of $x$ and $G_1$ are the same, the non-unit dimension of $x$ will be less than or equal to that of $G_1$. If the non-unit dimension of $x$ is smaller than the non-unit dimension of $G_1$, then there exists a minimal element (w.r.t containment) $y$ in $[x, G_1]$ such that the non-unit dimension of $y$ is smaller than that of $G_1$. If $y$ has an $Sp$ factor, then there exists an element $y_1\leq y$ such that $y_1$ has no $Sp$ factor (by unwinding the $Sp$ factors of $y$, we can construct $y_1$). Clearly, the non-unit dimensions of $y$ and $y_1$ are the same as the $Sp$ factors formed by absorbing unit elements, and the unit elements don't contribute to the non-unit dimension. But we assume that $y$ is minimal
among all the elements whose non-unit dimension is smaller than that of $G_1$. Hence, $y$ cannot have any $Sp$ factor. So, $y\leq a$  which implies $x\wedge a\geq y> G_1$. That is a contradiction to the assumption of the lemma. So, the non-unit dimension of $x$ must be the same as the non-unit dimension of $G_1$.
\end{proof}
\begin{proposition}\label{prop 2.11}
    Let $x$ be an element in $[G_1, G_2]$ such that $x\wedge a=G_1$, where $a=GL_w(q)GU_v(q)$ (defined in Lemma \ref{lemma 2.11}). If $x$ has a non-zero $Sp$ rank, then the length of $x$ is strictly less than that of $G_1$.
\end{proposition}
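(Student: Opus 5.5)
Since $x$ has non-zero $Sp$ rank, its $Sp$ factor $Sp_{2r}(q)$, $r\geq 1$, was formed (as in the proof of Proposition \ref{prop 2.9}) by absorbing unit factors. My plan is a dimension count for the unit part, combined with Lemma \ref{lemma 2.11}. By Lemma \ref{lemma 2.11}, the non-unit dimension of $x$ equals that of $G_1$. The total dimension of every element of $[G_1,G_2]$ is $k$. So the unit dimension of $G_1$ equals the unit dimension of $x$ plus its $Sp$ rank $r$.

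I first want to know how the unit factors of $x$ arise from $G_1$. Since $x\geq G_1$ and the non-unit dimensions agree, no non-unit factor of $G_1$ was merged into a unit factor of $x$ or into the $Sp$ factor of $x$. Such a merge would strictly lower the non-unit dimension, because a non-unit factor $GL_{a_i}(q^{d_i})$ with $d_i>1$ can only sit inside a unit factor $GL_{m}(q)$ or inside the $Sp$ factor, and then its dimension $a_id_i$ moves to the unit side or the $Sp$ side. Consequently, each unit factor of $x$ and the $Sp$ factor of $x$ are assembled only from unit factors of $G_1$, and every unit factor of $G_1$ lands in exactly one of these. Concretely, I would match factors of $G_1$ to the factors of $x$ containing them. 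Then the argument in Lemma \ref{lemma p<k}, that each factor is contained in a factor of at least the same dimension, shows the unit factors of $G_1$ are partitioned into groups, one group per unit factor of $x$ plus one group for the $Sp$ factor.

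Next I would show the group absorbed into the $Sp$ factor is nonempty: its dimensions sum to $r\geq 1$, so it contains at least one unit factor of $G_1$. The unit factors of $x$ then correspond to a partition of the remaining unit factors of $G_1$ into groups, each of which is nonempty since each unit factor of $x$ contains at least one factor of $G_1$. Hence the length of $x$ is at most (the length of $G_1$) minus (the number of unit factors absorbed into $Sp$), which is at most the length of $G_1$ minus $1$. This gives the strict inequality.

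The main obstacle is the claim that no non-unit factor of $G_1$ contributes to the $Sp$ factor or to a unit factor of $x$. This has to be deduced carefully from the equality of non-unit dimensions in Lemma \ref{lemma 2.11}. I would argue it by monotonicity: under $G_1\leq x$, each non-unit factor $GL_{a_i}(q^{d_i})$ or $GU_{b_i}(q^{c_i})$ of $G_1$ lies in a factor of $x$ that is either non-unit, unit, or the $Sp$ factor. In the second and third cases its dimension $a_id_i$ is lost from the non-unit total. Since non-unit factors of $x$ can only contain non-unit factors of $G_1$ (exponents are divisible, as in the proof of Lemma \ref{lemma p<k}), the non-unit dimension of $x$ is the sum of dimensions of those non-unit factors of $G_1$ that remain in non-unit factors of $x$. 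Equality with $G_1$'s non-unit dimension then forces all of them to remain.
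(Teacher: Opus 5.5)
Your proposal is correct and takes the same route as the paper: Lemma~\ref{lemma 2.11}, together with the fact that every element of $[G_1,G_2]$ has dimension $k$, shows that the $Sp$ rank of $x$ comes entirely out of the unit dimension of $G_1$. Your explicit partition of the unit factors of $G_1$ among the unit factors of $x$ and the $Sp$ factor, with exponent divisibility ruling out unit factors sitting inside non-unit ones, is a more careful version of the paper's brief remark that unit factors cannot be split, and it is that remark which turns the drop in unit dimension into a drop in the number of unit factors.
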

\begin{proof}
    Recall that the length of $x$ is the number of unit factors present in $x$. The non-unit dimension of $x$ and $G_1$ is the same [from Lemma \ref{lemma 2.11}], so the sum of the $Sp$ rank and unit dimension of $x$ and $G_1$ must be the same.  But the $Sp$ rank of $G_1$ is $0$, and the $Sp$ rank of $x$ is positive. So, the unit dimension of $x$ must be smaller than that of $G_1$. As $x\geq G_1$, the unit factors cannot be split into other factors, which leads to a decrease in the number of unit factors. So, the number of unit factors of $x$, which is the length of $x$, must be smaller than the length of $G_1$.
\end{proof}
\newpage
\begin{theorem}
    Let $G_1=\Pi_{i=1}^{p}GL_{a_i}(q^{d_i})\Pi_{i=1}^uGU_{b_i}(q^{c_i)}$ where $c_i,d_i\geq 1$ and \\ $G_2=Sp_{2(\sum_{i=1}^p a_id_i+\sum_{i=1}^u b_ic_i)}(q)$. If the length of $G_1$ is strictly less than the number of factors present in $G_1$, then $\mu(G_1, G_2)=0$.
\end{theorem}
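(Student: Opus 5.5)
We argue by strong induction on the length of $G_1$, using Weisner's theorem (Theorem \ref{Weisner's thm}) in the lattice $[G_1,G_2]$ with the element $a=GL_w(q)GU_v(q)$. Here $w=\sum_{i=1}^p a_id_i$ and $v=\sum_{i=1}^u b_ic_i$, as in Lemma \ref{lemma 2.11}. Since $a$ has no $Sp$ factor, $a\neq G_2$. (If $w=0$ or $v=0$ we take $a$ to be the single nonzero factor; this changes nothing below.) Weisner's theorem then gives
\begin{align*}
\mu(G_1,G_2)=-\sum_{x\neq G_1,\ x\wedge a=G_1}\mu(x,G_2).
\end{align*}
It suffices to show that every summand on the right vanishes.

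First we classify the $x\neq G_1$ with $x\wedge a=G_1$. If $x$ has no $Sp$ factor, then $x\leq a$, so $x\wedge a=x$ and hence $x=G_1$. Every other $x$ therefore has a nonzero $Sp$ rank. For such $x$, Lemma \ref{lemma 2.11} shows that the non-unit dimension of $x$ equals that of $G_1$. Since $x\geq G_1$, non-unit factors can only merge or coarsen their exponent, and either move would lower the non-unit dimension. We claim this forces the non-unit factors of $x$ to be exactly those of $G_1$. Granting the claim, $x=Sp_{2r}(q)\cdot x'$, where $x'$ consists of the unchanged non-unit factors of $G_1$ together with unit factors obtained by merging some of the unit factors of $G_1$. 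By Proposition \ref{prop 2.11}, the length of $x$ is strictly smaller than that of $G_1$.

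Next we apply the induction. Because $x$ carries the same non-unit factors as $G_1$, and $G_1$ has at least one non-unit factor by hypothesis, $x$ also has a non-unit factor. The interval $[x,G_2]$ is isomorphic to $[x',Sp_{2(k-r)}(q)]$ by the reduction at the start of Section 3 that strips off the $Sp$ factor. In $x'$ the length is strictly less than the number of factors, because the non-unit factors are still present. Moreover, the length of $x'$ equals the length of $x$, which is smaller than that of $G_1$. The induction hypothesis therefore gives $\mu(x,G_2)=\mu(x',Sp_{2(k-r)}(q))=0$, so every summand vanishes and $\mu(G_1,G_2)=0$.

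The base case is length $0$: then every factor of $G_1$ is non-unit, and Remark \ref{remark for d_i>1} gives $\mu(G_1,G_2)=0$ directly. This also covers the degenerate situation in which every unit factor of $x$ has been absorbed into the $Sp$ factor.

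The main obstacle is the claim above, that equality of non-unit dimensions, together with $x\geq G_1$, forces the non-unit factors of $x$ to be literally those of $G_1$. A priori a non-unit factor of $G_1$ might instead have been absorbed into the $Sp$ factor, or two non-unit factors might have been merged with their exponent kept greater than $1$. The first possibility is excluded by the dimension count, since absorption into $Sp$ lowers the non-unit dimension. For the second, I would first try to adapt the minimal-element argument of Lemma \ref{lemma 2.11}: unwind the $Sp$ factor of $x$ to obtain some $y\leq x$ with no $Sp$ factor. Then $y\leq a$ and $y\leq x$, so $y\leq x\wedge a=G_1$. This compares the non-unit part of $x$ with that of $G_1$ and gives the claim. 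Even if the claim only partly holds, the induction survives as long as $x$ keeps at least one non-unit factor, which the dimension count guarantees.
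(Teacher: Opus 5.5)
\textbf{Verdict: the proposal has a genuine gap, and it is the same gap as in the paper's proof; the statement itself fails in general.}

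Your skeleton matches the paper's proof. You use induction on the length, Weisner's theorem with $a=GL_w(q)GU_v(q)$, Proposition~\ref{prop 2.11} to lower the length of every $x\neq G_1$ carrying an $Sp$ factor, and Remark~\ref{remark for d_i>1} as the base case. You also add a check the paper omits: $x$ keeps a non-unit factor, because its non-unit dimension equals that of $G_1$, which is positive. So the inductive hypothesis really does apply to $x'$ after the $Sp$ factor is stripped.

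Your side claim that the non-unit factors of $x$ are literally those of $G_1$ is not needed, and its first justification is wrong. Merging $GL_{e}(q^d)\times GL_{e'}(q^d)$ into $GL_{e+e'}(q^d)$, or coarsening to an exponent still larger than $1$, leaves the non-unit dimension unchanged.

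The failing step is the one you take over from the paper: ``if $x$ has no $Sp$ factor, then $x\le a$''. This is Case~1 of Proposition~\ref{prop 2.9}, and your base case rests on it too. It assumes $GL$-type factors of $G_1$ can only grow inside $GL$-type factors. But a $GL$-type factor can lie in a $GU$-type one; for example, the Levi subgroup $GL_m(q^2)$ sits inside $GU_{2m}(q)$.

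Here is a concrete counterexample. Let $q\ge 4$, and let $G_1=T=GL_1(q^2)\subseteq Sp_4(q)$. Take $V=U\oplus U^*$ with $U=\mathbb{F}_{q^2}$, where $\lambda\in\mathbb{F}_{q^2}^\times$ acts by multiplication on $U$ and contragrediently on $U^*$. Then $a=GL_2(q)$, and the centralizers of elements of $T$ are as follows:
\begin{itemize}
\item $\lambda\in\mathbb{F}_q^\times\setminus\{\pm1\}$ has centralizer $GL_2(q)$.
\item $\lambda$ with $\lambda^{q+1}=1$, $\lambda\neq\pm1$, has eigenvalues $\lambda$ and $\lambda^{-1}=\lambda^q$, each of multiplicity $2$, and centralizer $GU_2(q)$.
\item $\lambda=\pm1$ has centralizer $Sp_4(q)$.
\item All remaining $\lambda$ are regular, with centralizer $T$.
\end{itemize}
So $[G_1,G_2]=\{T,GL_2(q),GU_2(q),Sp_4(q)\}$ is a diamond. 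The element $x=GU_2(q)$ has no $Sp$ factor and $x\not\le a$, yet $x\wedge a=G_1$. Weisner's theorem then gives $\mu(G_1,G_2)=-\mu(GU_2(q),Sp_4(q))=1$, even though $G_1$ has length $0<1$.

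No better argument can close this gap. The statement, and Remark~\ref{remark for d_i>1}, fail without further hypotheses, and the paper's proof has the same hole. Any correct version must account for $Sp$-free elements of $[G_1,G_2]$ that are not below $a$, since these can contribute nonzero terms to the Weisner sum.
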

\begin{proof}
   We will use induction on the length of $G_1$. If the length of $G_1$ is $0$ i.e., all $d_i,c_j>1$ then we have already proved that $\mu(G_1,G_2)=0$ in Remark \ref{remark for d_i>1}.\\
   Let the length of $G_1$ be $l>0$. By the induction hypothesis, we assume the statement holds when the length of $G_1$ is strictly less than $l$. Weisner's theorem (Theorem \ref{Weisner's thm}) implies that $\sum_{x:x\wedge a=G_1}\mu(x,G_2)=0$, where $a=GL_w(q)GU_v(q)$, $w=\sum_{i=1}^p a_id_i$ and $v=\sum_{i=1}^u b_ic_i $. By Proposition \ref{prop 2.11}, we know that if $x\wedge a=G_1$ then either $x$ has no $Sp$ factor or the length of $x$ is strictly smaller than $l$. If $x$ has no $Sp$ factor, then $x\leq a$, which means $x=G_1$. If $x$ has an $Sp$ factor, then the length of $x$ is strictly less than $l$. Therefore, by the induction hypothesis, for each non-zero $x$ satisfying $x\wedge a=G_1$, $\mu(x, G_2)=0$. Hence, by Weisner's theorem, we conclude that $\mu(G_1, G_2)=0$.
\end{proof}

\bibliographystyle{plain}
\bibliography{references}

\end{document}